\documentclass[final]{siamart251216}

\usepackage{amsmath,amssymb,mathtools}
\usepackage{algorithm}
\usepackage{algpseudocode}
\usepackage{booktabs}
\usepackage{enumitem}
\usepackage{graphicx}

\newsiamremark{remark}{Remark}

\newcommand{\R}{\mathbb{R}}

\newcommand{\Kcal}{\mathcal{K}}
\newcommand{\Omegacal}{\Omega}
\newcommand{\dist}{\operatorname{dist}}
\newcommand{\argmin}{\operatorname*{arg\,min}}
\newcommand{\lammin}{\lambda_{\min}}
\newcommand{\lammax}{\lambda_{\max}}

\title{Certificate-Coupled DCA for the Trust-Region Subproblem: Early Structural Escape and Progressive Global Certification}

\headers{Certificate-Coupled Difference-of-Convex Method}{N. T. Binh}

\author{
Binh T. Nguyen\thanks{
Facilty of Mathematics and Computer Science, University of Science, Vietnam National University Ho Chi Minh City,
Ho Chi Minh City, Vietnam. Email: ngtbinh@hcmus.edu.vn
}
}

\begin{document}

\maketitle

\begin{abstract}
We study the nonconvex trust-region subproblem by a certificate-coupled
difference-of-convex algorithm that incorporates spectral information before
exact first-order stationarity is attained.  The method maintains a persistent
randomized Krylov subspace together with the DCA iterates.  Near stationarity,
a sufficiently negative shifted Rayleigh quotient yields an explicit feasible
correction with a quantitative decrease in the objective value; otherwise,
completion of a prescribed Lanczos depth provides, with high probability, an
approximate positive-semidefiniteness certificate.  The same Krylov subspace also provides a certified upper bound on the largest eigenvalue for selecting the DC curvature parameter at subsequent accuracy stages.
We prove a uniform negative-curvature correction result over the trust region, covering the near-orthogonal hard-case regime, together with finite-termination and work bounds at fixed tolerances and a quantitative objective-gap certificate. Under a progressive accuracy schedule, the certified objective values converge to the global optimum and the iterates converge in distance to the global solution set, while the Krylov subspace is retained across corrections and accuracy stages. Numerical experiments illustrate the phenomena established by the analysis: negative-curvature corrections can occur before the corresponding post-convergence spectral corrections, the quantitative decrease bound is satisfied in all tested near-orthogonal instances, and retaining the Krylov subspace reduces the median number of spectral matrix-vector products by approximately a factor of $2.6$ relative to restarting the spectral process in the controlled comparison.
\end{abstract}

\begin{keywords}
difference-of-convex algorithm, trust-region subproblem, randomized Lanczos,
negative curvature, global optimality certificate, Krylov subspace method
\end{keywords}

\begin{MSCcodes}
90C20, 90C26, 65K05, 65F15
\end{MSCcodes}

\section{Introduction}
\label{sec:intro}

The trust-region subproblem (TRS) minimizes a possibly indefinite quadratic
function over a Euclidean ball.  Global optimality requires not only
first-order stationarity and complementarity, but also positive
semidefiniteness of an appropriately shifted Hessian; hence stationarity
alone is insufficient in the indefinite case.
Pham et al. \cite{PhamDinhLeThi1998} proposed a
difference-of-convex algorithm (DCA) for the TRS whose iterations reduce to
projections onto the Euclidean ball.  Their globalization procedure applies
a Lanczos spectral test after convergence of the DCA sequence and restarts
from a lower feasible point when negative curvature is detected.  The method
admits finitely many such restarts, and subsequent work established further
convergence and rate properties \cite{LeThiPhamDinhYen2012,Tuan2012,TuanYen2013}.

In this paper, spectral information is incorporated before exact first-order
stationarity is attained.  The method maintains a persistent randomized
Krylov subspace and, near stationarity, holds the primal iterate fixed while
the spectral approximation is refined.  A sufficiently negative shifted
Rayleigh direction yields a deterministic certificate of negative curvature
and an explicit feasible correction with a quantitative objective decrease.
Otherwise, completion of the prescribed Lanczos depth provides, with high
probability, an approximate positive-semidefiniteness certificate.  The
Krylov subspace is retained across corrections and accuracy stages and also
provides certified upper-spectral information for selecting the DC curvature
parameter.

Thus, unlike post-convergence spectral globalization, negative curvature can
modify the DCA sequence before exact stationarity while the accumulated
spectral information is retained.  This coupling yields finite
fixed-tolerance guarantees and, under progressive accuracy refinement,
convergence in objective value and in distance to the global solution set.

\paragraph{Contributions}
The main contributions of this paper can be given as follows.

First, we establish a uniform negative-curvature correction result for approximate stationary points of the TRS.  More precisely, if a feasible point has sufficiently small stationarity residual and admits a direction with sufficiently negative shifted Rayleigh quotient, then an explicit feasible correction can be constructed that yields a quantitatively bounded decrease in the objective value.  The result holds uniformly over the trust region and includes the near-orthogonal regime associated with the hard-case geometry.

Second, we develop a persistent bi-spectral certification procedure based on a single randomized Krylov subspace.  A sufficiently negative shifted Ritz value provides a deterministic certificate of violation of the positive-semidefiniteness condition and therefore permits an immediate correction.  Conversely, after a prescribed Lanczos depth, the absence of such a Ritz value yields, with high probability, an approximate positive-semidefiniteness certificate.  The same Krylov subspace provides certified upper-spectral information that is used to select the DC curvature parameter at subsequent stages.

Third, for prescribed tolerances $\varepsilon,\delta>0$, we prove finite termination of the resulting certificate-coupled DCA and derive explicit bounds on the numbers of DCA iterations, negative-curvature corrections, and spectral matrix-vector products.  The returned point satisfies
$$
S(x)\le\varepsilon,\qquad
\lambda_{\min}(A)+\lambda(x)\ge-\delta,
$$
and its objective value obeys the global error bound
$$
0\le q(x)-q^\star\le 2r\varepsilon+2r^2\delta.
$$
Fourth, we introduce a progressive certification scheme in which the tolerances are successively refined while the Krylov subspace is retained.  For the schedule $\varepsilon_m=r\delta_m$, the certified iterates satisfy
$$
0\le q(x_m)-q^\star\le4r^2\delta_m.
$$
Consequently, as $\delta_m\to0$, the objective values converge to $q^\star$ and
$$
\operatorname{dist}(x_m,X^\star)\to0.
$$
Moreover, the cumulative number of Krylov steps through a prescribed accuracy level is bounded by the order required at the final stage, up to fixed initialization overhead.

Finally, numerical experiments are used to examine the mechanisms predicted by the analysis.  In the controlled instances, certificate-coupled corrections occur before the corresponding post-convergence spectral corrections, the quantitative decrease bound is satisfied in all tested near-orthogonal cases, and retaining the Krylov subspace reduces the median spectral matrix-vector count by approximately a factor of $2.6$ relative to restarting the spectral process, without changing the corresponding DCA iteration and correction counts.

\subsection{Relation to prior work}
\label{subsec:related-work}

The closest antecedent is the DCA method for the TRS of Pham et al. \cite{PhamDinhLeThi1998}. There, DCA is run to stationarity, after which a Lanczos procedure is used to detect negative curvature and, when necessary, to restart the method from a lower feasible point; the resulting globalization requires only finitely many restarts. Later work established additional convergence and rate properties of DCA sequences for the TRS \cite{LeThiPhamDinhYen2012,Tuan2012,TuanYen2013}. The present method uses the same spectral characterization of global optimality, but employs spectral information before stationarity is attained and retains the Krylov subspace across negative-curvature corrections and accuracy stages.

More broadly, DCA and related DC methods have been extended through boosting and line-search strategies \cite{AbbaszadehpeivastiDeKlerkTaylor2026,AragonArtachoVuong2020}, Bregman formulations \cite{FaustFawziSaunderson2023,TakahashiFukudaTanaka2022}, and extrapolation \cite{PhanLeThi2023}. Recent analyses address nonasymptotic and worst-case rates \cite{AbbaszadehpeivastiDeKlerkZamani2024,RotaruPatrinosGlineur2025}, stronger stationarity notions \cite{LeThiHuynhPhamDinh2026}, and DC models within more general trust-region frameworks \cite{LeThiEtAl2014TR}. The present analysis instead concerns the incorporation of spectral certification into the DCA iteration for the TRS.

The TRS also admits non-DC approaches, including generalized-eigenvalue formulations \cite{AdachiIwataNakatsukasaTakeda2017}, globally convergent first-order methods \cite{BeckVaisbourd2018}, and large-scale Lanczos trust-region methods \cite{JiaWang2021,ZhangShenLi2017}; related variational properties are studied in \cite{JiangLi2022}. These methods provide alternative approaches to solving the TRS, whereas the present work concerns the incorporation of spectral certification within DCA.


\section{Problem Setup and DCA Dynamics}
\label{sec:setup}

We consider the trust-region subproblem
\begin{equation}
    q^\star
    :=
    \min_{x\in B_r} q(x),
    \qquad
    q(x)
    :=
    \frac12 x^\top A x+b^\top x,
    \qquad
    B_r:=\{x\in\R^n:\|x\|\le r\},
    \label{eq:trs}
\end{equation}
where $A=A^\top\in\R^{n\times n}$, $b\in\R^n$, and $r>0$.
We denote the global solution set by: 
$
    X^\star:=\argmin_{x\in B_r}q(x).
$
Throughout, we assume that a deterministic bound
\begin{equation}
    \|A\|_2\le M
    \label{eq:A-bound}
\end{equation}
is available.  We also fix a constant $\rho_{\min}>0$.
The gradient of the quadratic objective is denoted by
\begin{equation}
    g(x):=Ax+b.
    \label{eq:gradient}
\end{equation}
A feasible point $x^\star$ is globally optimal for \eqref{eq:trs} if and
only if there exists $\lambda^\star\ge0$ such that
\begin{equation}
    (A+\lambda^\star I)x^\star=-b,
    \qquad
    \lambda^\star(\|x^\star\|-r)=0,
    \qquad
    A+\lambda^\star I\succeq0.
    \label{eq:global-kkt}
\end{equation}
Thus, unlike first-order stationarity alone, global optimality contains a
spectral requirement on a shifted Hessian.  This difference drives the
certificate-coupled dynamics developed below.

\subsection{A difference-of-convex representation}
\label{subsec:dc-representation}

Whenever $\rho\ge\lammax(A)$, the objective in
\eqref{eq:trs}, including its trust-region indicator, admits the DC
representation
\begin{equation}
    q(x)+\chi_{B_r}(x)
    =
    G_\rho(x)-H_\rho(x),
    \label{eq:dc-decomposition}
\end{equation}
where
\begin{align}
    G_\rho(x)
    &:=
    \frac{\rho}{2}\|x\|^2+b^\top x+\chi_{B_r}(x),
    \label{eq:G-rho}\\
    H_\rho(x)
    &:=
    \frac12 x^\top(\rho I-A)x.
    \label{eq:H-rho}
\end{align}
Both $G_\rho$ and $H_\rho$ are convex.
The corresponding DCA update is
\begin{equation}
    D_\rho(x)
    :=
    P_{B_r}\left(x-\frac{1}{\rho}g(x)\right).
    \label{eq:dca-map}
\end{equation}
We allow $\rho$ to vary between certification stages, but every value used
by the algorithm will satisfy
\begin{equation}
    \rho\ge\max\{\rho_{\min},\lammax(A)\}.
    \label{eq:rho-admissible}
\end{equation}

\subsection{An exact stationarity residual}
\label{subsec:stationarity}

For $x\in B_r$, define
\begin{equation}
    \lambda(x)
    :=
    \begin{cases}
        0,
        & \|x\|<r,\\[1.5mm]
        \displaystyle
        \max\left\{
            0,
            -\frac{x^\top g(x)}{r^2}
        \right\},
        & \|x\|=r,
    \end{cases}
    \label{eq:lambda-x}
\end{equation}
and
\begin{equation}
    e(x):=g(x)+\lambda(x)x.
    \label{eq:e-x}
\end{equation}
We use the normal-cone residual
\begin{equation}
    S(x)
    :=
    \dist\bigl(0,g(x)+N_{B_r}(x)\bigr)
    \label{eq:S-def}
\end{equation}
as the first-order stationarity measure throughout the paper.

\begin{proposition}[Exact residual, DCA descent, and residual transfer]
\label{prop:dca-basic}
Let $x\in B_r$. Then,
\begin{equation}
    S(x)=\|e(x)\|.
    \label{eq:exact-residual}
\end{equation}
Moreover, let $\rho$ satisfy \eqref{eq:rho-admissible} and set
$z:=D_\rho(x)$. Thus,
\begin{equation}
    q(x)-q(z)
    \ge
    \left(\rho-\frac{\lammax(A)}{2}\right)\|z-x\|^2
    \ge
    \frac{\rho_{\min}}{2}\|z-x\|^2,
    \label{eq:dca-descent}
\end{equation}
and
\begin{equation}
    S(z)\le(\rho+M)\|z-x\|.
    \label{eq:residual-transfer}
\end{equation}
\end{proposition}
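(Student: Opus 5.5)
The plan is to prove the three assertions separately, using only the explicit form of the normal cone of the ball and the variational characterization of the Euclidean projection $P_{B_r}$. The only step needing care is the case analysis in \eqref{eq:exact-residual}; the other two are standard projection-inequality arguments.

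\emph{Exact residual.} For $\|x\|<r$ we have $N_{B_r}(x)=\{0\}$, so $S(x)=\|g(x)\|=\|e(x)\|$ because $\lambda(x)=0$. For $\|x\|=r$ we have $N_{B_r}(x)=\{tx: t\ge0\}$, so
$$S(x)^2=\min_{t\ge0}\|g(x)+tx\|^2=\min_{t\ge0}\bigl(\|g(x)\|^2+2t\,x^\top g(x)+t^2r^2\bigr).$$
This is a convex quadratic in $t$. Its unconstrained minimizer is $-x^\top g(x)/r^2$, so the minimizer over $t\ge0$ is $\max\{0,-x^\top g(x)/r^2\}=\lambda(x)$. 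Substituting gives $S(x)=\|g(x)+\lambda(x)x\|=\|e(x)\|$.

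\emph{DCA descent.} Set $y:=x-\rho^{-1}g(x)$, so that $z=P_{B_r}(y)$, and write $d:=z-x$. Since $x\in B_r$, the projection inequality gives $(y-z)^\top(x-z)\le0$. Expanding, this reads $\rho^{-1}g(x)^\top d\le-\|d\|^2$, that is, $g(x)^\top d\le-\rho\|d\|^2$. The Taylor expansion of the quadratic $q$ is exact:
$$q(z)-q(x)=g(x)^\top d+\tfrac12 d^\top Ad\le-\rho\|d\|^2+\tfrac12\lammax(A)\|d\|^2.$$
This is the first inequality in \eqref{eq:dca-descent}. For the second, note that $\rho\ge\lammax(A)$ gives $\rho-\lammax(A)/2\ge\rho/2$. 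Together with $\rho\ge\rho_{\min}$ from \eqref{eq:rho-admissible}, this yields $\rho-\lammax(A)/2\ge\rho_{\min}/2$. The argument remains valid when $\lammax(A)<0$.

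\emph{Residual transfer.} The optimality condition for the projection is $y-z\in N_{B_r}(z)$. Since $N_{B_r}(z)$ is a cone, multiplying by $\rho>0$ gives $\nu:=\rho(x-z)-g(x)\in N_{B_r}(z)$. Hence
$$S(z)\le\|g(z)+\nu\|=\|A(z-x)-\rho(z-x)\|=\|(A-\rho I)d\|\le(\|A\|_2+\rho)\|d\|\le(\rho+M)\|z-x\|,$$
where the last step uses \eqref{eq:A-bound}.
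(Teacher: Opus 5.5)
Your proposal is correct and follows essentially the same route as the paper: the exact residual comes from minimizing the scalar quadratic over the ray $N_{B_r}(x)=\{tx:t\ge0\}$, and the descent bound comes from testing the projection inequality at $x$ and using the exact quadratic expansion of $q$. The residual transfer uses the normal vector $\rho(x-z)-g(x)\in N_{B_r}(z)$ to bound $S(z)\le\|(A-\rho I)(z-x)\|\le(\rho+M)\|z-x\|$, exactly as in the paper, and your version simply spells out the intermediate algebra more explicitly.
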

\begin{proof}
If $\|x\|<r$, then $N_{B_r}(x)=\{0\}$ and $\lambda(x)=0$, so
\eqref{eq:exact-residual} can be implied immediately.  If $\|x\|=r$, then
$
    N_{B_r}(x)=\{\mu x:\mu\ge0\}
$ and hence,
$
    S(x)=\min_{\mu\ge0}\|g(x)+\mu x\|.
$
Minimizing the squared norm in the scalar $\mu$ gives exactly
\eqref{eq:lambda-x} and proves \eqref{eq:exact-residual}. 
Let $z=D_\rho(x)$.  Projection optimality yields
\[
    \left[x-\frac1\rho g(x)-z\right]^\top(y-z)\le0
    \qquad (y\in B_r).
\]
Taking $y=x$ gives
\[
    g(x)^\top(z-x)\le-\rho\|z-x\|^2.
\]
Since
\[
    q(z)-q(x)
    =
    g(x)^\top(z-x)+\frac12(z-x)^\top A(z-x),
\]
we obtain
\[
    q(z)-q(x)
    \le
    -\left(\rho-\frac{\lammax(A)}2\right)\|z-x\|^2.
\]
Because $\rho\ge\lammax(A)$ and $\rho\ge\rho_{\min}$,
\[
    \rho-\frac{\lammax(A)}2
    \ge\frac{\rho}{2}
    \ge\frac{\rho_{\min}}2,
\]
which proves \eqref{eq:dca-descent}.
Finally, projection optimality also implies
\[
    n_z:=\rho(x-z)-g(x)\in N_{B_r}(z).
\]
Therefore,
\begin{align*}
    S(z)
    &\le\|g(z)+n_z\|
    =\|A(z-x)+\rho(x-z)\|\\
    &\le(\rho+\|A\|_2)\|z-x\|
    \le(\rho+M)\|z-x\|,
\end{align*}
which proves \eqref{eq:residual-transfer}.
\end{proof}


\section{Structural Certificates and Early Escape}
\label{sec:structural-escape}

Define
\begin{equation}
    B(x):=A+\lambda(x)I.
    \label{eq:B-x}
\end{equation}
A direction of sufficiently negative curvature for $B(x)$ certifies that a
nearly stationary point remains structurally incompatible with global
optimality.

\subsection{Uniform control of the shifted matrix}

\begin{lemma}[Uniform shifted-norm bound]
\label{lem:shifted-bound}
For every $x\in B_r$,
\begin{equation}
    0\le\lambda(x)\le M+\frac{\|b\|}{r}.
    \label{eq:lambda-bound}
\end{equation}
Consequently, with
$ \bar L:=2M+\frac{\|b\|}{r}$,
we have
\begin{equation}
    \|B(x)\|_2\le\bar L
    \qquad (x\in B_r).
    \label{eq:B-bound}
\end{equation}
\end{lemma}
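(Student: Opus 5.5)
The plan is to argue directly from the definition \eqref{eq:lambda-x}, splitting into the interior and boundary cases. Nonnegativity is immediate in both cases: $\lambda(x)=0$ when $\|x\|<r$, and when $\|x\|=r$ it is a maximum with $0$. Likewise the upper bound is trivial in the interior case, since $0\le M+\|b\|/r$. So all the content sits in the boundary case $\|x\|=r$ with $\lambda(x)=-x^\top g(x)/r^2>0$.

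In that case we expand $-x^\top g(x)=-x^\top Ax-b^\top x$. We bound the quadratic term by $|x^\top A x|\le\|A\|_2\|x\|^2\le Mr^2$, using \eqref{eq:A-bound}. We bound the linear term by Cauchy--Schwarz, $|b^\top x|\le\|b\|r$. This gives
\[
\lambda(x)\le\frac{Mr^2+\|b\|r}{r^2}=M+\frac{\|b\|}{r},
\]
which establishes \eqref{eq:lambda-bound}.

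For \eqref{eq:B-bound}, we apply the triangle inequality to $B(x)=A+\lambda(x)I$:
\[
\|B(x)\|_2\le\|A\|_2+\lambda(x)\le M+M+\frac{\|b\|}{r}=\bar L,
\]
using $\lambda(x)\ge0$ so that $\|\lambda(x)I\|_2=\lambda(x)$.

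There is no real obstacle here. The only point needing care is that the bound uses $\|x\|=r$ exactly on the boundary, so that dividing by $r^2$ cancels the factor $\|x\|^2$ in the quadratic term. It also uses that the maximum with $0$ affects only the lower bound.
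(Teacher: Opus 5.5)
Your proof is correct and follows essentially the same route as the paper: the interior case is trivial, and on the sphere you bound $\lambda(x)\le |x^\top(Ax+b)|/r^2\le M+\|b\|/r$ via $\|A\|_2\le M$ and Cauchy--Schwarz. The bound on $\|B(x)\|_2$ then follows from the triangle inequality, as in the paper.
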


\begin{proof}
The claim is immediate for $\|x\|<r$.  If $\|x\|=r$, then
\begin{align*}
    \lambda(x)
    &\le
    \frac{|x^\top(Ax+b)|}{r^2}
    \le
    \frac{\|A\|_2\|x\|^2+\|b\|\|x\|}{r^2}
    \le M+\frac{\|b\|}{r}.
\end{align*}
The bound on $\|B(x)\|_2$ follows from the triangle inequality.
\end{proof}

\subsection{A master perturbation identity}
\label{subsec:perturbation-identity}

\begin{lemma}[Feasible perturbation identity]
\label{lem:perturbation}
Let $x\in B_r$ and $d\in\R^n$.  If $\|x\|<r$, then
\begin{equation}
    q(x+d)-q(x)
    =
    e(x)^\top d+\frac12d^\top B(x)d.
    \label{eq:perturb-interior}
\end{equation}
If $\|x\|=r$ and $\|x+d\|=r$, then the same identity holds.
\end{lemma}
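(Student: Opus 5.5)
The plan is to start from the exact second-order Taylor expansion of the quadratic $q$ and then rewrite the gradient and Hessian in terms of $e(x)$ and $B(x)$. For any $x,d\in\R^n$,
\[
    q(x+d)-q(x)=g(x)^\top d+\tfrac12 d^\top A d,
\]
since $q$ is quadratic. By \eqref{eq:e-x} and \eqref{eq:B-x}, $g(x)=e(x)-\lambda(x)x$ and $A=B(x)-\lambda(x)I$. Substituting these gives
\[
    q(x+d)-q(x)=e(x)^\top d+\tfrac12 d^\top B(x)d-\lambda(x)\Bigl(x^\top d+\tfrac12\|d\|^2\Bigr).
\]
So the whole statement reduces to showing that the correction term $\lambda(x)\bigl(x^\top d+\tfrac12\|d\|^2\bigr)$ vanishes in each of the two cases.

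In the interior case $\|x\|<r$, the definition \eqref{eq:lambda-x} gives $\lambda(x)=0$. The correction term therefore vanishes and \eqref{eq:perturb-interior} follows at once, with no restriction on $d$.

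In the boundary case $\|x\|=r$ with $\|x+d\|=r$, expand $\|x+d\|^2=\|x\|^2+2x^\top d+\|d\|^2$. Both norms equal $r$, so $2x^\top d+\|d\|^2=0$, that is, $x^\top d+\tfrac12\|d\|^2=0$. The correction term again vanishes, whatever the value of $\lambda(x)\ge0$, and the same identity holds.

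There is no real obstacle here. The only step needing care is to write the substitution $A=B(x)-\lambda(x)I$ inside the quadratic form so that the $\lambda(x)$ contributions group exactly into $x^\top d+\tfrac12\|d\|^2$. That grouping is precisely the quantity killed by the sphere constraint, which is why the identity requires $x+d$ to stay on the sphere and not merely in $B_r$.
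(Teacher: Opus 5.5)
Your proof is correct and follows essentially the same route as the paper. The paper states the general identity with the correction term $-\lambda(x)\bigl(x^\top d+\tfrac12\|d\|^2\bigr)$ and observes that it vanishes in the interior because $\lambda(x)=0$, and on the sphere because $\|x+d\|=\|x\|$; you additionally carry out the substitution $g(x)=e(x)-\lambda(x)x$, $A=B(x)-\lambda(x)I$ that derives this identity, which the paper leaves implicit.
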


\begin{proof}
For every $d$,
\begin{equation}
    q(x+d)-q(x)
    =
    e(x)^\top d+\frac12d^\top B(x)d
    -
    \lambda(x)\left(x^\top d+\frac12\|d\|^2\right).
    \label{eq:master-general}
\end{equation}
The last term vanishes in the interior because $\lambda(x)=0$.
On the boundary, if $\|x+d\|=r=\|x\|$, then
$
    x^\top d+\frac12\|d\|^2=0.
$
\end{proof}

\subsection{Negative shifted curvature as an actionable certificate}

Let $v$ be any unit vector satisfying
\begin{equation}
    v^\top B(x)v\le-\kappa
    \qquad (\kappa>0).
    \label{eq:negative-witness}
\end{equation}
Define
\begin{equation}
    \alpha_\kappa
    :=
    \min\left\{
        \frac1{16},
        \frac{\kappa}{32\bar L}
    \right\}.
    \label{eq:alpha-kappa}
\end{equation}

\begin{theorem}[Uniform early structural escape]
\label{thm:early-escape}
Let $x\in B_r$, let $\|v\|=1$, and suppose
\[
    v^\top B(x)v\le-\kappa
\]
for some $\kappa>0$.  If
\begin{equation}
    S(x)
    \le
    \frac4{27}\kappa\alpha_\kappa r,
    \label{eq:escape-residual-condition}
\end{equation}
then one can explicitly construct $x^+\in B_r$ such that
\begin{equation}
    q(x)-q(x^+)
    \ge
    \frac4{27}\kappa\alpha_\kappa^2r^2.
    \label{eq:escape-decrease}
\end{equation}
\end{theorem}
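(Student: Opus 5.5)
The plan is to construct $x^+$ explicitly as $x+d$ with $d$ built from $v$, and then bound $q(x^+)-q(x)$ with the general identity \eqref{eq:master-general} from the proof of Lemma~\ref{lem:perturbation}:
\[
q(x+d)-q(x)=e(x)^\top d+\tfrac12 d^\top B(x)d-\lambda(x)\bigl(x^\top d+\tfrac12\|d\|^2\bigr).
\]
The key observation is that the last term is harmless in two situations. It vanishes when $\lambda(x)=0$, which covers every interior point. It also vanishes when $\|x\|=\|x+d\|=r$. So on the boundary I keep $x^+$ exactly on the sphere, and in the interior I only need $x^+\in B_r$.

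First, I fix the sign of $v$ so that $x^\top v\le 0$; this is allowed because $v^\top B(x)v$ is invariant under $v\mapsto -v$. Moving along $v$ then points inward or tangentially. For a step length $t>0$, I set
\[
x^+:=\frac{r}{\|x+tv\|}(x+tv)\quad\text{if }\|x\|=r,
\]
and in the interior I use $x^+:=x+tv$ if this point is feasible, and its radial projection onto the sphere otherwise. In every case $d=x^+-x=tv+c\,(x+tv)$ for a scalar $c$. I will show that $|c|\lesssim t^2/r^2$ plus a term controlled by $|x^\top v|\,t/r^2$. Here $x^\top v\le0$ and $\|x+tv\|^2=\|x\|^2+2t\,x^\top v+t^2$.

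The main obstacle is this radial correction, specifically the near-orthogonal regime $x^\top v\approx 0$. There the correction is second order, $c=O(t^2/r^2)$, and it must not destroy the curvature gain. I would expand
\[
\tfrac12 d^\top B(x) d\le -\tfrac12\kappa t^2+\bar L\bigl(|c|\,t\,r+c^2r^2\bigr)(1+O(t/r)),
\]
using $\|B(x)\|_2\le\bar L$ from Lemma~\ref{lem:shifted-bound}. With $t=\tau r$ and $\tau\le\alpha_\kappa\le 1/16$, the error terms are at most $\bar L\,O(\tau^3)r^2\le \bar L\,\alpha_\kappa\,O(\tau^2)r^2$. Because $\alpha_\kappa\le\kappa/(32\bar L)$, these errors absorb only a fixed fraction of $\tfrac12\kappa t^2$. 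When $x^\top v$ is large and negative, the point $x+tv$ lies strictly inside the ball, and I would instead use a shortened step or the interior branch.

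The linear term obeys $|e(x)^\top d|\le S(x)\|d\|\le S(x)\,t(1+O(\tau))$, since $S(x)=\|e(x)\|$ by \eqref{eq:exact-residual}. Combining the pieces gives a bound of the form
\[
q(x)-q(x^+)\ge \kappa\,\phi(\tau)\,r^2-S(x)\,\tau r(1+O(\tau)),
\]
with $\phi(\tau)\ge c_0\tau^2$ for $\tau\le\alpha_\kappa$. I expect the precise constants, and in particular $4/27=\max_{s\in[0,1]}s(1-s)^2$, to come from optimizing the step within $[0,\alpha_\kappa r]$ against the residual: taking $\tau=\alpha_\kappa$ (or the optimal fraction of it) and inserting $S(x)\le\frac4{27}\kappa\alpha_\kappa r$ should yield \eqref{eq:escape-decrease}. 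If the constants do not close on the first attempt, I would retune the step fraction $s$ and take the worst case over the boundary and interior branches.
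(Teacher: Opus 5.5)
\textbf{There is a genuine gap: boundary points where $v$ has a substantial radial component.} This is the case $\|x\|=r$ with $|x^\top v|$ large compared with $\alpha_\kappa r$, and your construction fails there.

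First, the radial projection of $x+tv$ degenerates. For $v=-x/r$ it returns $x^+=x$. In general $|c|\approx|x^\top v|\,t/r^2$, so your error term $\bar L|c|\,t\,r\approx\bar L|x^\top v|\,t^2/r$ is of the same order as the gain $\frac12\kappa t^2$. It is not absorbed unless $|x^\top v|/r=O(\kappa/\bar L)$.

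Second, the fallback you mention does not help. Leaving the sphere along $d=tv$ from a boundary point makes the last term of \eqref{eq:master-general} equal to $\lambda(x)\,t\,(|x^\top v|-t/2)$. This is a penalty that is first order in $t$ when $\lambda(x)>0$, so shortening the step only lets it dominate more.

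In fact no step of length $O(\alpha_\kappa r)$ can work. Take $n=2$, $A=\operatorname{diag}(-2\kappa,0)$, $b=(\kappa r,0)^\top$, $x=re_1$ and $v=-e_1$. Then $g(x)=-\kappa x$, $\lambda(x)=\kappa$, $S(x)=0$ and $v^\top B(x)v=-\kappa$, so every hypothesis holds. Yet $q(y)=\kappa y_1(r-y_1)\ge0=q(x)$ for every $y\in B_r$ with $y_1\ge0$. Hence every $x^+\in B_r$ with $q(x^+)<q(x)$ satisfies $\|x^+-x\|>r$. All your candidate steps have length at most $2\alpha_\kappa r\le r/8$. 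This $x$ is a nonglobal local minimizer on the sphere, which is precisely what the correction is meant to escape, so it cannot be fixed by retuning constants.

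\textbf{The missing idea is to use long chords.} Identity \eqref{eq:master-general} is exact for a quadratic, so there is no Taylor remainder, and long steps are both allowed and favorable. Once $\|d\|\ge\alpha_\kappa r$, the residual hypothesis gives $S(x)\|d\|\le\frac4{27}\kappa\|d\|^2$, and the guaranteed decrease grows with $\|d\|$.

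For $\|x\|=r$ and $|x^\top v|\ge\alpha_\kappa r$, the paper takes the chord along $v$ to the other intersection with the sphere, $d=-2(x^\top v)v$ (its Case~II). Both endpoints lie on the sphere, the Rayleigh quotient is exactly $v^\top B(x)v$, and $\|d\|=2|x^\top v|\ge2\alpha_\kappa r$. In the example this gives $x^+=-x$ with decrease $2\kappa r^2$.

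With this case added, the rest of your plan becomes a workable alternative to the paper:
\begin{itemize}
\item For interior $x$, $\lambda(x)=0$, and your sign choice gives $\|x+tv\|^2\le r^2+t^2$.
\item For boundary $x$ with $|x^\top v|<\alpha_\kappa r$, taking $t=\alpha_\kappa r$ gives $\|x+tv\|^2\in[r^2(1-\alpha_\kappa^2),r^2(1+\alpha_\kappa^2)]$.
\end{itemize}
In both cases the radial coefficient is $O(\alpha_\kappa^2)$. Your radial projection then replaces the paper's tilted chord $w=v+2\alpha_\kappa\operatorname{sgn}(c)x/r$. At leading order the slack is $\frac12-\frac8{27}=\frac{11}{54}$ in units of $\kappa\alpha_\kappa^2r^2$, so the constants should close, but you still need to verify this explicitly.

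Note also that $\frac4{27}$ does not arise from optimizing a step fraction. In the paper it is half of $\frac8{27}=\frac12\cdot\frac{3/4}{(9/8)^2}$, left over after the residual term has been absorbed.
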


\begin{proof}[Proof outline]
The complete geometric argument is deferred to
Appendix~\ref{app:escape-proof} and we formulate the common mechanism here.

Let
$
    h:=\sqrt{r^2-\|x\|^2}$, and
$
    c:=\frac{x^\top v}{r}.
$
The construction separates three exhaustive regimes:
(i) $h\ge\alpha_\kappa r$;
(ii) $h<\alpha_\kappa r$ and $|c|\ge\alpha_\kappa$;
and (iii) $h<\alpha_\kappa r$ and $|c|<\alpha_\kappa$.
In the first two cases, the farther intersection of the line generated by
$v$ with the trust-region sphere gives a feasible displacement $d$ with
$\|d\|\ge\alpha_\kappa r$ and sufficient negative shifted curvature.

In the third case, define
\[
    w:=v+\tau\frac{x}{r},
    \qquad
    \tau=2\alpha_\kappa\operatorname{sgn}(c),
\]
with either sign used when $c=0$.  Since
$\|B(x)\|_2\le\bar L$,
$
    w^\top B(x)w
    \le
    -\kappa+(2|\tau|+\tau^2)\bar L.
$
The choice \eqref{eq:alpha-kappa} implies
$
    w^\top B(x)w<-\frac34\kappa,
$
and
$
    \|w\|\le\frac98,
$
and hence,
\begin{equation}
    \frac{w^\top B(x)w}{\|w\|^2}
    \le-\frac{16}{27}\kappa.
    \label{eq:hard-curvature}
\end{equation}
The farther sphere intersection again gives $\|d\|\ge\alpha_\kappa r$.
Thus, all cases yield
\begin{equation}
    \frac12d^\top B(x)d
    \le-\frac8{27}\kappa\|d\|^2.
    \label{eq:uniform-quadratic-decrease}
\end{equation}
By Lemma~\ref{lem:perturbation},
\[
    q(x+d)-q(x)
    \le S(x)\|d\|-\frac8{27}\kappa\|d\|^2.
\]
Using \eqref{eq:escape-residual-condition} and
$\|d\|\ge\alpha_\kappa r$ gives
\[
    q(x+d)-q(x)
    \le-\frac4{27}\kappa\alpha_\kappa^2r^2.
\]
Taking $x^+=x+d$ proves the result.
\end{proof}



\section{Persistent Bi-Spectral Certification and the Algorithm}
\label{sec:bispectral}

The previous section turns negative-shifted curvature into an actionable
correction.  We now specify how such curvature information is obtained and
reused.  The key implementation principle is to maintain a single persistent
random-start Krylov process for the fixed matrix $A$.  Its lower spectral edge
drives within-stage structural decisions, whereas completed upper-edge
certificates calibrate the DC decomposition used by subsequent accuracy
stages.

\subsection{Persistent Krylov spaces and Ritz values}
\label{subsec:krylov}

Draw once
$
    q_1\sim\operatorname{Unif}(\mathbb S^{n-1}),
$
and define, for $j\ge1$,
\begin{equation}
    \Kcal^{(j)}
    :=
    \operatorname{span}
    \{q_1,Aq_1,\ldots,A^{j-1}q_1\}.
    \label{eq:krylov-space}
\end{equation}
Throughout the paper, $j$ denotes the \emph{Krylov order}.  Thus,
$\dim\Kcal^{(j)}\le j$, with strict inequality possible if the Lanczos
process reaches an invariant subspace.  If exact breakdown occurs before a
prescribed order, spectral refinement terminates at that invariant subspace;
under the random start used here, the resulting extremal Ritz values equal the
extremal eigenvalues of $A$ with probability one.  Such a breakdown can only
reduce the spectral work bounds below.

Define the extreme Ritz values
\begin{align}
    \theta_j^-
    &:=
    \min_{\substack{v\in\Kcal^{(j)}\\\|v\|=1}}
    v^\top Av,
    \label{eq:ritz-low}\\
    \theta_j^+
    &:=
    \max_{\substack{v\in\Kcal^{(j)}\\\|v\|=1}}
    v^\top Av.
    \label{eq:ritz-high}
\end{align}
Let $v_j^-$ and $v_j^+$ denote corresponding unit Ritz vectors whenever
needed.
Because
\begin{equation}
    \Kcal^{(j)}(A,q_1)=\Kcal^{(j)}(-A,q_1),
    \label{eq:krylov-sign}
\end{equation}
one persistent basis supports both spectral edges.

\subsection{Randomized two-edge spectral certification}
\label{subsec:lanczos-cert}

We use the following specialization of standard random-start Lanczos
bounds; see, e.g., \cite{KuczynskiWozniakowski1992,RoyerWright2018}.
The constants are written explicitly because they enter the certification
budget.

\begin{theorem}[Two-edge randomized Lanczos certificate]
\label{thm:lanczos-two-edge}
Let $A=A^\top$ satisfy $\|A\|_2\le M$.  Fix a spectral tolerance
$\delta>0$ and a failure budget $p\in(0,1)$.  Define
\begin{equation}
    J_\delta(p)
    :=
    \min\left\{
        n,\,
        1+
        \left\lceil
            \frac12
            \log\left(\frac{11n}{p^2}\right)
            \sqrt{\frac{2M}{\delta}}
        \right\rceil
    \right\}.
    \label{eq:J-delta-p}
\end{equation}
Then, there exists an event $\Omegacal_{\delta,p}$ with
\begin{equation}
    \Pr(\Omegacal_{\delta,p})\ge1-p
    \label{eq:omega-prob}
\end{equation}
such that, at $J:=J_\delta(p)$,
\begin{align}
    0
    &\le
    \theta_J^--\lammin(A)
    \le\frac{\delta}{4},
    \label{eq:lanczos-lower}\\
    0
    &\le
    \lammax(A)-\theta_J^+
    \le\frac{\delta}{4}.
    \label{eq:lanczos-upper}
\end{align}
\end{theorem}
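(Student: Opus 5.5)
The approach is to reduce each edge to the classical random-start Lanczos bound of Kuczyński and Woźniakowski, and then combine the two edges with a union bound.

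\textbf{Deterministic lower bounds.} The left-hand inequalities in \eqref{eq:lanczos-lower} and \eqref{eq:lanczos-upper} hold surely. Since $\Kcal^{(J)}\subseteq\R^n$, the Courant--Fischer characterization gives $\theta_J^-\ge\lammin(A)$ and $\theta_J^+\le\lammax(A)$ for every realization of $q_1$.

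\textbf{Trivial case.} If $J=n$ and no breakdown occurs, then $\Kcal^{(n)}=\R^n$ with probability one, because $q_1$ almost surely has nonzero components along every eigenspace. The Ritz values are then exact.

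If breakdown occurs at an invariant subspace, the remark after \eqref{eq:krylov-space} applies. Under the random start this subspace almost surely contains nonzero components along every eigenspace, since Krylov spaces of $q_1$ capture one vector per distinct eigenvalue. Hence the extremal Ritz values are exact and we may take $\Omegacal_{\delta,p}$ to be of full probability.

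\textbf{Main case: the upper edge.} Assume $J-1=\lceil\tfrac12\log(11n/p^2)\sqrt{2M/\delta}\rceil<n$. I would invoke the Kuczyński--Woźniakowski estimate (as restated in \cite{RoyerWright2018}) for a positive semidefinite matrix $C$ with $\lammax(C)\le L$, tolerance $\epsilon$ and failure probability $p'$. It states that after
\[
k\ge\tfrac12\log(1.648\,n/p'^2)\sqrt{L/\epsilon}
\]
Lanczos steps from a uniform start, $\lammax(C)-\theta^+\le\epsilon$ holds with probability at least $1-p'$.

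To apply it, take $C:=A+MI\succeq0$, which has $\lammax(C)\le2M$ and generates the same Krylov spaces as $A$. Choose $\epsilon=\delta/4$ and $p'=p/2$. The required step count becomes
\[
\tfrac12\log(6.592\,n/p^2)\sqrt{8M/\delta}.
\]

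\textbf{Lower edge and union bound.} For the lower edge I would use $C':=MI-A$. By \eqref{eq:krylov-sign} it has the same Krylov space as $A$, and $\lambda_{\max}(C')=M-\lammin(A)$. The same estimate then bounds $\theta_J^--\lammin(A)$ by $\delta/4$ on an event of probability at least $1-p/2$.

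Let $\Omegacal_{\delta,p}$ be the intersection of the two events. The union bound gives \eqref{eq:omega-prob}, because both events concern the same $q_1$ and the same $J$.

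\textbf{Main obstacle.} The expected difficulty is the constant bookkeeping: matching $J_\delta(p)$ as stated, with $\sqrt{2M/\delta}$ rather than $\sqrt{8M/\delta}$, and with the factor $11$ inside the logarithm.

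If the naive shift-by-$M$ argument yields a factor $2$ too many in the square root, I would first check which form of the KW bound is being used. The Chebyshev-polynomial version gives the probability as roughly $1.648\sqrt n\,e^{-\sqrt{\epsilon/L}\,(2k-1)}$. It contains $2k-1$ rather than $k$ in the exponent, and that gains the missing factor. Writing $\sqrt{\epsilon/L}=\sqrt{\delta/(8M)}$ and requiring $1.648\sqrt n\,e^{-(2k-1)\sqrt{\delta/8M}}\le p/2$ gives
\[
k\ge\tfrac12+\tfrac12\sqrt{8M/\delta}\,\log\bigl(3.3\sqrt n/p\bigr)=\tfrac12+\tfrac14\sqrt{8M/\delta}\,\log\bigl(10.9\,n/p^2\bigr).
\]
Since $\tfrac14\sqrt{8}=\tfrac12\sqrt2$, this equals $\tfrac12+\tfrac12\log(10.9n/p^2)\sqrt{2M/\delta}$, which is dominated by $J_\delta(p)$ thanks to the leading $1+\lceil\cdot\rceil$. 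The constant $11\ge 4\cdot1.648^2$ absorbs the rest.

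So the plan is: fix that precise form of the KW inequality, verify the arithmetic above, and record the $\min\{n,\cdot\}$ case separately as above.
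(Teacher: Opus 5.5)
Your proposal is correct and is essentially the paper's argument. The paper invokes the packaged one-edge random-start Lanczos bound from \cite{RoyerWright2018}: precision $\epsilon/2$ at depth $1+\lceil\frac12\log(2.75n/\pi^2)\sqrt{M/\epsilon}\rceil$, applied to $H=A$ and $H=-A$ with $\epsilon=\delta/2$, $\pi=p/2$, followed by a union bound. That lemma is exactly the Kuczy\'nski--Wo\'zniakowski estimate $1.648\sqrt n\,e^{-\sqrt{\epsilon}(2k-1)}$ applied to the shifted matrices $MI\mp A$, which you unpack directly. Your first-quoted ``naive'' form of the bound is misstated, but the corrected $(2k-1)$ version you settle on reproduces the paper's constants, since $11=4\cdot 2.75\ge 4\cdot1.648^2$.
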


\begin{proof}
The random-start Lanczos estimate used in
\cite{RoyerWright2018} implies the following one-edge statement.
For a symmetric matrix $H$ with $\|H\|_2\le M$, an absolute smallest-eigenvalue
error not exceeding $\epsilon/2$ is obtained with failure probability at most
$\pi$ after a sufficient Krylov depth bounded by
\[
    \min\left\{
        n,\,
        1+
        \left\lceil
            \frac12
            \log\left(\frac{2.75n}{\pi^2}\right)
            \sqrt{\frac{M}{\epsilon}}
        \right\rceil
    \right\}.
\]
Apply this statement to $H=A$ with
$
    \epsilon=\frac{\delta}{2},
$ and 
$
    \pi=\frac{p}{2}.
$
This gives \eqref{eq:lanczos-lower}.  Apply the same statement to $H=-A$,
using \eqref{eq:krylov-sign}; the minimum Ritz value of $-A$ is the negative
of the maximum Ritz value of $A$, yielding \eqref{eq:lanczos-upper}.
The union bound gives total failure probability at most $p$.
Substituting $\epsilon=\delta/2$ and $\pi=p/2$ yields
\eqref{eq:J-delta-p}.
\end{proof}

\begin{remark}
\label{rem:krylov-index}
Equation~\eqref{eq:J-delta-p} is interpreted as a sufficient Krylov
order.  In the absence of early exact breakdown, reaching order $J$ requires
$J+O(1)$ products with $A$, depending on the initialization convention.  If
exact breakdown occurs earlier, the required work is smaller.  We report
matrix-vector products separately in the computational study.
\end{remark}

\subsection{Deterministic negative-curvature detection and probabilistic certification}
\label{subsec:red-green}

Fix a spectral tolerance $\delta>0$ and define
\begin{equation}
\kappa_\delta:=\frac{3\delta}{4}.
\label{eq:kappa-delta}
\end{equation}
At a feasible point $x\in B_r$, let $(\theta_j^-,v_j^-)$ denote a
lower-edge Ritz pair obtained from the Krylov subspace of dimension $j$.
Since $v_j^-$ is normalized,
$$
    (v_j^-)^\top B(x)v_j^-
    =
    \theta_j^-+\lambda(x).
$$
\paragraph{Negative-curvature certificate}
At any Krylov order $j$, suppose that
\begin{equation}
\theta_j^-+\lambda(x)
\le -\frac{3\delta}{4}.
\label{eq:red-test}
\end{equation}
Then, $v_j^-$ is a directly verifiable direction of negative curvature for
$B(x)$, with
$$
    (v_j^-)^\top B(x)v_j^-
    \le -\kappa_\delta.
$$
Thus, whenever the stationarity-residual condition of
Theorem~\ref{thm:early-escape} is also satisfied, the theorem applies with
$\kappa=\kappa_\delta$ and yields an explicit feasible point with a
quantified decrease in the objective value.  This implication is
deterministic and does not rely on an accuracy guarantee for the Ritz
approximation.

\paragraph{Approximate positive-semidefiniteness certificate}
Failure of \eqref{eq:red-test} at an intermediate Krylov order does not
certify approximate positive semidefiniteness, since negative curvature may
remain undetected outside the current Krylov subspace.  Let
$J=J_\delta(p)$ be the prescribed randomized Lanczos depth.  Only after this
depth has been reached do we use the complementary condition
\begin{equation}
\theta_J^-+\lambda(x)>-\frac{3\delta}{4}
\label{eq:green-test}
\end{equation}
as a certificate.  On the event $\Omegacal_{\delta,p}$, the lower-edge
Lanczos bound gives
$$
    \lambda_{\min}(A)
    \ge \theta_J^- - \frac{\delta}{4},
$$
and therefore \eqref{eq:green-test} implies
$$
    \lambda_{\min}(A)+\lambda(x)>-\delta.
$$
Hence, the full-depth certificate establishes approximate positive
semidefiniteness of the shifted matrix $B(x)$ with the probability associated
with $\Omegacal_{\delta,p}$.
For brevity in the algorithmic description below, we refer to
\eqref{eq:red-test} as a \emph{RED} certificate and to the full-depth
condition \eqref{eq:green-test} as a \emph{GREEN} certificate.

\begin{proposition}[Validity of the spectral certificates]
\label{prop:red-green}
Let $x\in B_r$ and fix $\delta>0$ and $p\in(0,1)$. Then, the following
statements hold: 
\begin{enumerate}[label=(\alph*)]
\item If \eqref{eq:red-test} holds at some Krylov order $j$, then the
associated unit Ritz vector $v_j^-$ is a direction of negative curvature for
the shifted matrix $B(x)=A+\lambda(x)I$, with
\[
    (v_j^-)^\top B(x)v_j^-
    \le -\frac{3\delta}{4}.
\]
\item Let $J=J_\delta(p)$. On the event $\Omegacal_{\delta,p}$, if
\eqref{eq:green-test} holds, then
\begin{equation}
    \lambda_{\min}(A)+\lambda(x)>-\delta.
    \label{eq:green-valid}
\end{equation}
\end{enumerate}
\end{proposition}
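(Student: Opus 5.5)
Both parts come down to the identity $(v_j^-)^\top B(x)v_j^-=\theta_j^-+\lambda(x)$ for the unit lower-edge Ritz vector. For (b) we add the lower-edge estimate of Theorem~\ref{thm:lanczos-two-edge}. I will treat the two parts separately.

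For part (a), let $v_j^-$ be the unit Ritz vector attaining the minimum in \eqref{eq:ritz-low}, so $(v_j^-)^\top Av_j^-=\theta_j^-$. By \eqref{eq:B-x},
\[
(v_j^-)^\top B(x)v_j^- = (v_j^-)^\top Av_j^- + \lambda(x)\|v_j^-\|^2 = \theta_j^-+\lambda(x).
\]
Hypothesis \eqref{eq:red-test} then gives the bound $\le -3\delta/4=-\kappa_\delta$ directly. This part is deterministic: it uses neither the randomness of $q_1$ nor any accuracy of the Ritz value. The only point to check is that the minimum in \eqref{eq:ritz-low} is attained, which holds because the unit sphere of the finite-dimensional subspace $\Kcal^{(j)}$ is compact.

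For part (b), fix $J=J_\delta(p)$ and work on the event $\Omegacal_{\delta,p}$ from Theorem~\ref{thm:lanczos-two-edge}. By \eqref{eq:lanczos-lower}, $\theta_J^- - \lammin(A)\le \delta/4$, that is, $\lammin(A)\ge\theta_J^- -\delta/4$. Adding $\lambda(x)$ and using \eqref{eq:green-test} gives
\[
\lammin(A)+\lambda(x)\ \ge\ \theta_J^-+\lambda(x)-\frac{\delta}{4}\ >\ -\frac{3\delta}{4}-\frac{\delta}{4}=-\delta,
\]
which is \eqref{eq:green-valid}. The inequality is strict because \eqref{eq:green-test} is strict.

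There is little real obstacle; the one subtlety is early Lanczos breakdown before depth $J$. In that case, as noted after \eqref{eq:krylov-space}, the Krylov space is invariant and the extremal Ritz value equals $\lammin(A)$ almost surely. The bound \eqref{eq:lanczos-lower} therefore still holds on $\Omegacal_{\delta,p}$, possibly after removing a null set, and the argument is unchanged.
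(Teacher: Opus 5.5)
Your proof is correct and matches the paper's argument: part (a) is the identity $(v_j^-)^\top B(x)v_j^-=\theta_j^-+\lambda(x)$ combined with \eqref{eq:red-test}, and part (b) combines the lower-edge bound \eqref{eq:lanczos-lower} on $\Omegacal_{\delta,p}$ with the strict inequality \eqref{eq:green-test}. Your remark on early breakdown is harmless but unnecessary. The value $\theta_J^-$ is defined through the span $\Kcal^{(J)}$, and \eqref{eq:lanczos-lower} holds on $\Omegacal_{\delta,p}$ by the definition of that event.
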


\begin{proof}
For part~(a), since $v_j^-$ is a unit Ritz vector associated with
$\theta_j^-$,
$
    (v_j^-)^\top A v_j^-=\theta_j^-.
$
Therefore,
$
    (v_j^-)^\top B(x)v_j^-
    =
    \theta_j^-+\lambda(x)
    \le -\frac{3\delta}{4}.
$

For part~(b), the definition of $\Omegacal_{\delta,p}$ gives
$
    0\le
    \theta_J^- - \lambda_{\min}(A)
    \le \frac{\delta}{4},
$
and hence,
$
    \lambda_{\min}(A)
    \ge \theta_J^- - \frac{\delta}{4}.
$
Combining this inequality with
$
    \theta_J^-+\lambda(x)>-\frac{3\delta}{4}
$
yields
$$
    \lambda_{\min}(A)+\lambda(x)
    >
    -\frac{3\delta}{4}-\frac{\delta}{4}
    =-\delta,
$$
which proves \eqref{eq:green-valid}.
\end{proof}
\begin{remark}
\label{rem:asymmetric-cert}
The two spectral certificates are inherently asymmetric.  A direction with a
negative shifted Rayleigh quotient provides a deterministic certificate that
$B(x)$ is not positive semidefinite.  In contrast, the absence of negative
curvature from a finite Krylov subspace does not exclude its presence in the
orthogonal complement.  Consequently, approximate positive semidefiniteness
is asserted only after completion of the prescribed randomized Lanczos
budget, and the resulting certificate holds with the probability specified
by the event $\Omegacal_{\delta,p}$.
\end{remark}

\subsection{Certificate Lock}
\label{subsec:certificate-lock}

Near stationarity, the primal iterate is held fixed until the spectral
condition at that point has been resolved to the prescribed accuracy.  This
keeps the shifted matrix $B(x)$ unchanged while its extremal spectral
information is refined.

For fixed tolerances $\varepsilon,\delta>0$, define
\begin{equation}
    \alpha_\delta
    :=
    \min\left\{
        \frac1{16},
        \frac{3\delta}{128\bar L}
    \right\},
    \label{eq:alpha-delta}
\end{equation}
and
\begin{equation}
    \tau_{\varepsilon,\delta}
    :=
    \min\left\{
        \varepsilon,\,
        \frac{\delta\alpha_\delta r}{9}
    \right\}.
    \label{eq:tau-eps-delta}
\end{equation}
We also set
\begin{equation}
    \Delta_\delta
    :=
    \frac{\delta\alpha_\delta^2r^2}{9}.
    \label{eq:Delta-delta}
\end{equation}
These constants are chosen from Theorem~\ref{thm:early-escape} with
$\kappa=3\delta/4$.  Indeed,
\[
    \alpha_\delta=\alpha_{3\delta/4},
    \qquad
    \frac4{27}\left(\frac{3\delta}{4}\right)\alpha_\delta r
    =
    \frac{\delta\alpha_\delta r}{9}.
\]
Hence, $S(x)\le\tau_{\varepsilon,\delta}$ implies both
$S(x)\le\varepsilon$ and the residual condition required by
Theorem~\ref{thm:early-escape}.  Moreover, whenever
\[
    (v_j^-)^\top B(x)v_j^-
    \le-\frac{3\delta}{4},
\]
the decrease guaranteed by that theorem is
\[
    q(x)-q(x^+)
    \ge
    \frac4{27}\left(\frac{3\delta}{4}\right)
    \alpha_\delta^2r^2
    =
    \Delta_\delta.
\]

\begin{definition}[Certificate Lock]
\label{def:certificate-lock}
A feasible point $x$ enters \emph{Certificate Lock} when
\begin{equation}
    S(x)\le\tau_{\varepsilon,\delta}.
    \label{eq:lock-entry}
\end{equation}
While the lock is active, $x$ is held fixed and the persistent Krylov
subspace is enlarged until one of the following conditions occurs:
\begin{enumerate}
    \item the negative-curvature condition \eqref{eq:red-test} is satisfied,
    in which case Theorem~\ref{thm:early-escape} produces a feasible point
    $x^+$ satisfying
    \[
        q(x^+)\le q(x)-\Delta_\delta,
    \]
    after which the DCA iteration resumes from $x^+$; or
    \item the prescribed depth $J_\delta(p)$ is reached without
    \eqref{eq:red-test}, in which case, on $\Omegacal_{\delta,p}$,
    \[
        S(x)\le\varepsilon,
        \qquad
        \lambda_{\min}(A)+\lambda(x)>-\delta,
    \]
    and the current accuracy stage terminates.
\end{enumerate}
\end{definition}


\subsection{Stagewise upper-edge calibration}
\label{subsec:upper-calibration}

The upper spectral edge is used differently from the lower edge. We do not form an upper confidence bound from each intermediate Ritz value. Instead, upper-edge information is used to update the DC curvature parameter only after completion of a certified spectral stage.\\
Initialize
\begin{equation}
    U_{-1}:=M.
    \label{eq:U-minus-one}
\end{equation}
Suppose stage $m$ has completed with structural tolerance $\delta_m$ and
spectral depth $J_m:=J_{\delta_m}(p_m)$.  Define
\begin{equation}
    U_m
    :=
    \min\left\{
        U_{m-1},\,
        \theta_{J_m}^++\frac{\delta_m}{4}
    \right\}.
    \label{eq:U-update}
\end{equation}
The next stage uses
\begin{equation}
    \rho_{m+1}
    :=
    \max\{\rho_{\min},U_m\}.
    \label{eq:rho-stage}
\end{equation}
Also set
\begin{equation}
    \bar\rho:=\max\{\rho_{\min},M\}.
    \label{eq:rho-bar}
\end{equation}

\begin{proposition}[Validity of stagewise model calibration]
\label{prop:upper-calibration}
On the intersection of the successful upper-edge events through stage $m$,
\begin{equation}
    U_m\ge\lammax(A),
    \label{eq:U-safe}
\end{equation}
and consequently,
\begin{equation}
    \lammax(A)
    \le
    \rho_{m+1}
    \le
    \bar\rho.
    \label{eq:rho-safe}
\end{equation}
Moreover, $U_m\le U_{m-1}$.
\end{proposition}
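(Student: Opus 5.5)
The plan is an induction on the stage index $m$, starting from $U_{-1}=M$. For the base case, $\lammax(A)\le\|A\|_2\le M=U_{-1}$ by \eqref{eq:A-bound}. This holds deterministically and needs no probabilistic event.

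For the inductive step, assume $U_{m-1}\ge\lammax(A)$ on the intersection of the successful events through stage $m-1$. On the successful upper-edge event of stage $m$, Theorem~\ref{thm:lanczos-two-edge} applies at depth $J_m=J_{\delta_m}(p_m)$, and \eqref{eq:lanczos-upper} gives $\lammax(A)-\theta_{J_m}^+\le\delta_m/4$, that is, $\theta_{J_m}^++\delta_m/4\ge\lammax(A)$. Both arguments of the minimum in \eqref{eq:U-update} are then at least $\lammax(A)$, so $U_m\ge\lammax(A)$. This proves \eqref{eq:U-safe}.

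One subtlety is the possibility of exact Lanczos breakdown before depth $J_m$. By the convention of Section~\ref{subsec:krylov}, the extremal Ritz value in that case equals $\lammax(A)$ with probability one. I would fold this into the event and note that the inequality still holds.

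For \eqref{eq:rho-safe}:
\begin{itemize}
\item Lower bound: $\rho_{m+1}=\max\{\rho_{\min},U_m\}\ge U_m\ge\lammax(A)$.
\item Upper bound: a trivial induction gives $U_m\le U_{m-1}\le\dots\le U_{-1}=M$, so $\rho_{m+1}\le\max\{\rho_{\min},M\}=\bar\rho$.
\end{itemize}
The monotonicity $U_m\le U_{m-1}$ is immediate from the minimum in \eqref{eq:U-update} and holds deterministically, independent of any event.

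I expect no real obstacle. The only care needed is the bookkeeping of which event is used at which stage: the lower-bound claim requires every stage's upper-edge event to have held, because a single failure could push $U_m$ below $\lammax(A)$ and this would persist through the minimum. That is exactly why the statement is phrased on the intersection of the events through stage $m$.
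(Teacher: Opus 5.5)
Your proposal is correct and follows essentially the same argument as the paper. Both induct from the deterministic base case $U_{-1}=M\ge\|A\|_2\ge\lammax(A)$, use the upper-edge bound \eqref{eq:lanczos-upper} at depth $J_m$ to keep both arguments of the minimum in \eqref{eq:U-update} at least $\lammax(A)$, and read off \eqref{eq:rho-safe} from \eqref{eq:rho-stage} and $U_m\le U_{-1}=M$; your explicit treatment of the deterministic monotonicity $U_m\le U_{m-1}$ and of early exact breakdown only spells out what the paper leaves implicit.
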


\begin{proof}
The initialization $U_{-1}=M$ is safe because $\|A\|_2\le M$.  On the
successful upper-edge event at stage $m$,
\[
    \lammax(A)
    \le
    \theta_{J_m}^++\frac{\delta_m}{4}.
\]
If $U_{m-1}\ge\lammax(A)$, then the minimum in \eqref{eq:U-update} also
remains at least $\lammax(A)$.  Induction proves \eqref{eq:U-safe}.
The inequalities in \eqref{eq:rho-safe} follow from
\eqref{eq:rho-stage}, $U_m\le U_{-1}=M$, and the definition of
$\bar\rho$.
\end{proof}


\subsection{Progressive certificate-coupled DCA}
\label{subsec:algorithm}

The progressive method uses geometric structural tolerances
\begin{equation}
    \delta_m=\delta_0\beta^m,
    \qquad 0<\beta<1,
    \label{eq:delta-schedule}
\end{equation}
first-order tolerances
\begin{equation}
    \varepsilon_m=r\delta_m,
    \label{eq:epsilon-schedule}
\end{equation}
and failure allocation
\begin{equation}
    p_m=\frac{p}{2^{m+1}}.
    \label{eq:p-schedule}
\end{equation}
The resulting state consists of the primal point, the accuracy stage, the
persistent Krylov basis, and the latest certified upper spectral bound.

\begin{algorithm}[t]
\scalebox{0.8}{%
\caption{Progressive Bi-Spectral Certificate-Coupled DCA}
\label{alg:cc-dca}

\centering
\begin{minipage}{1.20\linewidth}
\begin{algorithmic}[1]
\Require
$A=A^\top$, $b$, $r$, $M\ge\|A\|_2$, $\rho_{\min}>0$,
$\delta_0>0$, $\beta\in(0,1)$, $p\in(0,1)$,
feasible $x^{(0)}$
\State Draw one random unit vector $q_1$ and initialize the persistent
       Lanczos/Krylov state with
       $\Kcal^{(1)}=\operatorname{span}\{q_1\}$
\State $j\gets1$ \Comment{current persistent Krylov order}
\State $U_{-1}\gets M$
\For{$m=0,1,2,\ldots$}
    \State $\delta_m\gets\delta_0\beta^m$,
           $\varepsilon_m\gets r\delta_m$,
           $p_m\gets p/2^{m+1}$
    \State compute $\alpha_{\delta_m}$,
           $\tau_m:=\tau_{\varepsilon_m,\delta_m}$, and
           $J_m:=J_{\delta_m}(p_m)$; if exact Lanczos breakdown has already
           occurred, set $j\gets J_m$
    \State $\rho_m\gets\max\{\rho_{\min},U_{m-1}\}$
    \State $x\gets x^{(m)}$
    \While{stage $m$ is unresolved}
        \If{$S(x)>\tau_m$}
            \State $x\gets D_{\rho_m}(x)$
        \Else
            \State activate \textsc{Certificate-Lock} at the fixed point $x$
            \While{current Krylov order $j<J_m$}
                \If{$\theta_j^-+\lambda(x)\le-3\delta_m/4$}
                    \State declare \textsc{Red}
                    \State construct the negative-curvature correction $x^+$
                           from Theorem~\ref{thm:early-escape}
                    \State $x\gets x^+$; release the lock
                    \State \textbf{break}
                \Else
                    \State extend the persistent Krylov process by one step;
                           set $j\gets j+1$, or $j\gets J_m$ upon exact breakdown
                \EndIf
            \EndWhile
            \If{the lock is active and $j=J_m$}
                \If{$\theta_{J_m}^-+\lambda(x)\le-3\delta_m/4$}
                    \State declare \textsc{Red}
                    \State construct the negative-curvature correction $x^+$
                           from Theorem~\ref{thm:early-escape}
                    \State $x\gets x^+$; release the lock
                \Else
                    \State declare \textsc{Green} and set $x_m\gets x$
                    \State terminate stage $m$
                \EndIf
            \EndIf
        \EndIf
    \EndWhile
    \State $U_m\gets
        \min\{U_{m-1},\theta_{J_m}^++\delta_m/4\}$
    \State $x^{(m+1)}\gets x_m$
\EndFor
\end{algorithmic}
\end{minipage}%
}
\end{algorithm}

\begin{remark}
\label{rem:persistent-state}
The Krylov basis is never restarted after a negative-curvature correction or at a new
accuracy stage.  This is valid because the matrix $A$ is fixed; only the
scalar shift $\lambda(x)I$ changes with the primal state.  The consequences
for finite-stage complexity and progressive spectral amortization are proved
in Sections~\ref{sec:fixed-stage} and~\ref{sec:progressive}.
\end{remark}


\section{Fixed-Tolerance Certification and Complexity}
\label{sec:fixed-stage}

We first analyze one accuracy stage with fixed tolerances
$(\varepsilon,\delta)$ and failure budget $p$.  Throughout this section,
let
\begin{equation}
    J:=J_\delta(p),
    \qquad
    \tau:=\tau_{\varepsilon,\delta},
    \qquad
    \Delta:=\Delta_\delta,
    \label{eq:fixed-stage-shorthand}
\end{equation}
and suppose that the DCA parameter used in the stage obeys
\begin{equation}
    \lammax(A)\le \rho\le\bar\rho.
    \label{eq:fixed-stage-rho}
\end{equation}
Let $x^{\rm in}$ be the stage input and denote its initial objective gap by
\begin{equation}
    \mathcal B
    :=
    q(x^{\rm in})-q^\star.
    \label{eq:stage-gap}
\end{equation}
Since all primal moves decrease the objective, $\mathcal B$ is an upper bound
on the objective budget available throughout the stage.

\subsection{Negative-curvature correction and DCA counts}
\label{subsec:fixed-counts}

Every negative-curvature correction occurs under Certificate Lock, so
$S(x)\le\tau$.  By \eqref{eq:tau-eps-delta},
\[
    S(x)
    \le
    \frac{\delta\alpha_\delta r}{9}
    =
    \frac4{27}\left(\frac{3\delta}{4}\right)
    \alpha_\delta r.
\]
The corresponding Ritz vector satisfies
\[
    (v_j^-)^\top B(x)v_j^-
    \le-\frac{3\delta}{4}.
\]
Thus, Theorem~\ref{thm:early-escape}, with
$\kappa=3\delta/4$, applies and every negative-curvature correction satisfies
\begin{equation}
    q(x)-q(x^+)
    \ge
    \Delta_\delta.
    \label{eq:red-decrease-fixed}
\end{equation}

\begin{proposition}[Number of structural corrections]
\label{prop:escape-count}
The number $N_E$ of negative-curvature corrections during a fixed-tolerance
stage satisfies
\begin{equation}
    N_E
    \le
    \left\lfloor
        \frac{\mathcal B}{\Delta_\delta}
    \right\rfloor.
    \label{eq:NE-bound}
\end{equation}
\end{proposition}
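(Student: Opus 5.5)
The bound will follow from a telescoping argument on the objective values along the primal trajectory of the stage. Within a fixed-tolerance stage, the primal iterate changes only through DCA steps $x\gets D_\rho(x)$ and negative-curvature corrections $x\gets x^+$; during Certificate Lock it is held fixed. I would order all primal moves chronologically, write $x^{\rm in}=y_0,y_1,\dots$ for the successive primal states, and record for each move whether it is a DCA step or a correction.

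The first step is monotonicity. By Proposition~\ref{prop:dca-basic}, with $\rho$ satisfying \eqref{eq:fixed-stage-rho} (hence \eqref{eq:rho-admissible}, since the stage parameter is $\max\{\rho_{\min},\cdot\}$), each DCA step gives $q(y_{k+1})\le q(y_k)$. The second step is the per-correction decrease. Every correction is triggered inside Certificate Lock, so $S(y_k)\le\tau_{\varepsilon,\delta}\le\frac4{27}\kappa_\delta\alpha_\delta r$. The Ritz vector is a unit vector with $(v_j^-)^\top B(y_k)v_j^-\le-\frac{3\delta}{4}$ by Proposition~\ref{prop:red-green}(a). Theorem~\ref{thm:early-escape} with $\kappa=3\delta/4$ therefore applies and yields \eqref{eq:red-decrease-fixed}, that is, $q(y_k)-q(y_{k+1})\ge\Delta_\delta$. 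Note that this implication is deterministic and needs no probabilistic event.

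Telescoping, for any finite prefix of the trajectory containing $N$ corrections, I obtain
\[
  N\Delta_\delta\le\sum_k\bigl(q(y_k)-q(y_{k+1})\bigr)=q(x^{\rm in})-q(y_{\rm last})\le q(x^{\rm in})-q^\star=\mathcal B,
\]
where the last inequality uses feasibility of every iterate: projections and Theorem~\ref{thm:early-escape} keep points in $B_r$, so $q(y_{\rm last})\ge q^\star$. Hence $N\le\mathcal B/\Delta_\delta$ for every prefix. Since $N$ is an integer, $N\le\lfloor\mathcal B/\Delta_\delta\rfloor$. Because the bound is uniform over prefixes, the total count $N_E$ is finite and satisfies \eqref{eq:NE-bound}, even before finite termination of the stage is established.

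The only delicate point is checking that every correction really qualifies for Theorem~\ref{thm:early-escape}. This includes the correction issued after the inner loop at $j=J$, which also occurs with the lock active and $S(x)\le\tau$. It also requires $\Delta_\delta>0$, which holds since $\delta,r,\alpha_\delta>0$. Otherwise the argument is routine.
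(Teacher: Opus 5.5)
Your proposal is correct and follows essentially the same approach as the paper: each correction occurs under Certificate Lock, so Theorem~\ref{thm:early-escape} with $\kappa=3\delta/4$ gives a decrease of at least $\Delta_\delta$; combined with $q\ge q^\star$ on $B_r$, this yields $N_E\Delta_\delta\le q(x^{\rm in})-q^\star=\mathcal B$. Your explicit checks of DCA monotonicity (including $\rho\ge\rho_{\min}$), feasibility of all iterates, the post-loop correction at $j=J$, and the integrality step spell out what the paper's short proof takes as implicit.
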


\begin{proof}
Every structural correction decreases $q$ by at least $\Delta_\delta$ by
\eqref{eq:red-decrease-fixed}, while $q$ is bounded below by $q^\star$.
After $N_E$ corrections,
\[
    N_E\Delta_\delta
    \le q(x^{\rm in})-q^\star
    =\mathcal B,
\]
which proves the result.
\end{proof}
We next count DCA steps.  Call a DCA step $x\mapsto z=D_\rho(x)$
\emph{far} if its successor still satisfies
\begin{equation}
    S(z)>\tau.
    \label{eq:far-step}
\end{equation}
By Proposition~\ref{prop:dca-basic},
\[
    S(z)\le(\rho+M)\|z-x\|
    \le(\bar\rho+M)\|z-x\|.
\]
Thus, every far step has
\begin{equation}
    \|z-x\|
    >
    \frac{\tau}{\bar\rho+M}.
    \label{eq:far-displacement}
\end{equation}
Combining this with the DCA decrease in
\eqref{eq:dca-descent} gives
\begin{equation}
    q(x)-q(z)
    >
    \frac{\rho_{\min}}{2}
    \frac{\tau^2}{(\bar\rho+M)^2}.
    \label{eq:far-decrease}
\end{equation}

\begin{proposition}[Number of far DCA steps]
\label{prop:far-count}
The number $N_{\rm far}$ of far DCA steps satisfies
\begin{equation}
    N_{\rm far}
    \le
    \left\lfloor
        \frac{
            2\mathcal B(\bar\rho+M)^2
        }{
            \rho_{\min}\tau_{\varepsilon,\delta}^2
        }
    \right\rfloor.
    \label{eq:Nfar-bound}
\end{equation}
\end{proposition}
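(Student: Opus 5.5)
The plan is a telescoping objective-budget argument, in the same spirit as Proposition~\ref{prop:escape-count}. The inputs are the per-step decrease \eqref{eq:far-decrease} for far steps and the monotonicity of $q$ along every primal move within the stage.

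First I would record that all primal moves in the stage are nonincreasing in $q$. DCA steps satisfy \eqref{eq:dca-descent}; this is legitimate because $\rho$ obeys \eqref{eq:fixed-stage-rho}, and together with $\rho\ge\rho_{\min}$ it gives admissibility \eqref{eq:rho-admissible}. Negative-curvature corrections satisfy \eqref{eq:red-decrease-fixed}. Lock phases leave $x$ fixed. Hence, along the whole trajectory of the stage, $q$ never increases and stays at least $q^\star$, because every iterate lies in $B_r$.

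Next, let $x_1\mapsto z_1,\dots,x_{N}\mapsto z_{N}$ be the far steps, listed in order, with $N=N_{\rm far}$. These pairs occupy disjoint, ordered segments of the trajectory. Summing the decreases therefore gives
\[
\sum_{k=1}^{N}\bigl(q(x_k)-q(z_k)\bigr)\le q(x^{\rm in})-q^\star=\mathcal B,
\]
since the intervening non-far DCA steps, corrections and lock phases only contribute further nonnegative decrease. The same inequality holds when restricted to any finite prefix, which also covers the case where the stage has not yet terminated.

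Each summand strictly exceeds $c:=\rho_{\min}\tau^2/\bigl(2(\bar\rho+M)^2\bigr)$ by \eqref{eq:far-decrease}. This rests on Proposition~\ref{prop:dca-basic}, namely $S(z)\le(\rho+M)\|z-x\|\le(\bar\rho+M)\|z-x\|$ together with $S(z)>\tau$. We therefore get $Nc<\mathcal B$ if $N\ge1$, and $N\le\mathcal B/c$ in all cases. Since $N$ is an integer, $N\le\lfloor \mathcal B/c\rfloor$, which is exactly \eqref{eq:Nfar-bound}. Applied to every finite prefix, this also shows that $N_{\rm far}$ is finite.

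There is no serious obstacle. The only care needed is to justify that \eqref{eq:dca-descent} applies with the stage value of $\rho$, and that interleaved corrections do not break the telescoping. Both are handled by the monotonicity observation in the first step.
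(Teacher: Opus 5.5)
Your proposal is correct and follows the paper's argument. You sum the strict per-step decrease \eqref{eq:far-decrease} over the far steps, use that every other primal move in the stage is nonincreasing in $q$, and bound the total by $\mathcal B$ via $q\ge q^\star$. Your extra care about admissibility of the stage parameter $\rho$ and about finite prefixes, which yields finiteness of $N_{\rm far}$ before termination is established, makes explicit what the paper's one-line proof leaves implicit.
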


\begin{proof}
Sum the uniform decrease \eqref{eq:far-decrease} over all far steps and use
$q\ge q^\star$.
\end{proof}

A remaining DCA step may have a successor satisfying $S(z)\le\tau$; such a
step enters a Certificate Lock.  Each lock either terminates the stage with a
full-depth certificate or ends with one negative-curvature correction.  Therefore,
the number of lock entries caused by DCA steps is at most one more than the number of negative-curvature
corrections.

\begin{proposition}[Certificate-Lock entries]
\label{prop:entry-count}
Let $N_{\rm enter}$ denote the number of DCA steps whose successor enters a
Certificate Lock.  Then,
\begin{equation}
    N_{\rm enter}\le N_E+1.
    \label{eq:Nenter-bound}
\end{equation}
Consequently,
\begin{equation}
    N_{\rm DCA}
    \le
    N_{\rm far}+N_E+1.
    \label{eq:NDCA-bound}
\end{equation}
\end{proposition}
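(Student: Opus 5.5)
The argument tracks the stage as a sequence of lock episodes separated by DCA steps, and then partitions the DCA steps. In Algorithm~\ref{alg:cc-dca} a DCA step is taken only when the current point satisfies $S(x)>\tau$. Once some point satisfies $S(x)\le\tau$, the point is frozen and a Certificate Lock is activated. By Definition~\ref{def:certificate-lock}, and by the fact that the inner loop can only stop at order $J$ or after a \textsc{Red} test, every lock ends in exactly one of two ways: a negative-curvature correction followed by resumption of DCA, or a \textsc{Green} declaration that terminates the stage. The Krylov order $j$ is nondecreasing and bounded by $J$, so no lock can continue indefinitely. In particular, a lock that reaches $j=J$ without a \textsc{Red} test ends the stage.

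For \eqref{eq:Nenter-bound}, I would consider the locks that are entered as a result of a DCA step. Each such lock is a distinct lock episode, since between consecutive DCA steps at most one lock is entered. Among all lock episodes in the stage, at most one ends with \textsc{Green}, because \textsc{Green} terminates the stage. Every other episode ends with a correction, and distinct episodes produce distinct corrections. Hence the total number of lock episodes is at most $N_E+1$, and so is $N_{\rm enter}$. One subtlety must be handled: a lock may also be entered without a preceding DCA step, either at the stage input or directly after a correction $x^+$ that already satisfies $S(x^+)\le\tau$. These episodes are still counted by the same injection into corrections plus one, so they do not spoil the bound; they only make it looser.

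For \eqref{eq:NDCA-bound}, each DCA step $x\mapsto z=D_\rho(x)$ falls into exactly one of two classes. If $S(z)>\tau$, the step is far in the sense of \eqref{eq:far-step}. If $S(z)\le\tau$, then $z$ immediately enters a Certificate Lock, because the algorithm tests $S$ at the next pass and sees $S(z)\le\tau$. The step is therefore counted in $N_{\rm enter}$. It follows that $N_{\rm DCA}=N_{\rm far}+N_{\rm enter}\le N_{\rm far}+N_E+1$.

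The main obstacle is the bookkeeping in the first step: proving that each lock produces at most one correction and that the correspondence between locks and corrections is injective. This requires care in two places. First, the \textsc{Red} test may fire either inside the inner while loop or at the order-$J$ check, and I would show that each path releases the lock after a single correction. Second, the case of a correction landing directly in a new lock must be covered, which I would handle by counting all lock episodes rather than only those entered through DCA steps. Finiteness of $N_E$ and $N_{\rm far}$ comes from Propositions~\ref{prop:escape-count} and~\ref{prop:far-count}, so all of these counts are finite.
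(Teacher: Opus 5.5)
Your proof is correct and follows the paper's argument: every lock except the terminal \textsc{Green} one ends with exactly one negative-curvature correction, so there are at most $N_E+1$ lock episodes and hence at most $N_E+1$ lock-entering DCA steps, and every DCA step is either far or lock-entering. One small slip is your justification that ``between consecutive DCA steps at most one lock is entered,'' which is false, since a correction can land directly in a new lock (possibly several times) before the next DCA step; the injectivity you need still holds immediately, because each lock-entering DCA step is followed at once by its own lock episode.
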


\begin{proof}
Before the terminal certification lock, every completed lock must terminate
with a negative-curvature correction.  Hence, at most $N_E$ lock entries can be
followed by such a
correction and at most one additional entry can be terminal.  Every DCA step
is either far or a lock-entry step, proving \eqref{eq:NDCA-bound}.
\end{proof}

\subsection{Finite termination and certificate validity}
\label{subsec:fixed-termination}

The persistent Krylov state is refined only up to order $J$, unless exact Lanczos breakdown resolves the spectral problem earlier.
It is not restarted after a negative-curvature correction.  Once depth $J$ is
available, a later lock is resolved immediately by testing
\eqref{eq:red-test} and, if it fails, the full-depth condition
\eqref{eq:green-test}.

\begin{theorem}[Fixed-tolerance termination and certification]
\label{thm:fixed-stage}
Consider one stage of Algorithm~\ref{alg:cc-dca} with accuracy pair
$(\varepsilon,\delta)$, failure budget $p$, and DCA parameter satisfying
\eqref{eq:fixed-stage-rho}.  Let $\mathcal B$ be defined by
\eqref{eq:stage-gap}.  Then:
\begin{enumerate}
    \item the stage terminates after finitely many primal and spectral
    operations;
    \item its structural-correction and DCA counts satisfy
    \eqref{eq:NE-bound} and \eqref{eq:NDCA-bound};
    \item the stage requires at most $J_\delta(p)$ Krylov steps, and the
    number of newly generated products with $A$ is at most
    $J_\delta(p)+O(1)$;
    \item on the event $\Omegacal_{\delta,p}$ of
    Theorem~\ref{thm:lanczos-two-edge}, the stage output
    $x_{\varepsilon,\delta}$ satisfies
    \begin{align}
        S(x_{\varepsilon,\delta})
        &\le\varepsilon,
        \label{eq:fixed-output-residual}\\
        \lammin(A)+\lambda(x_{\varepsilon,\delta})
        &>-\delta.
        \label{eq:fixed-output-psd}
    \end{align}
\end{enumerate}
\end{theorem}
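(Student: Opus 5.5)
The plan is to assemble the four claims from the counting results of Section~\ref{subsec:fixed-counts}, adding only an argument that the spectral work within the stage is finite and monotone. I would prove item 2 first, since item 1 follows from it.

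\textbf{Item 2.} The decrease bounds \eqref{eq:red-decrease-fixed} and \eqref{eq:far-decrease} are valid at every step of the stage. The reasons are as follows.
\begin{itemize}
\item Every primal move decreases $q$: DCA steps by Proposition~\ref{prop:dca-basic}, and corrections by Theorem~\ref{thm:early-escape}. Hence $q(x)-q^\star\le\mathcal B$ throughout the stage.
\item Each correction is made under Certificate Lock with $\kappa_\delta=3\delta/4$. There, $\tau_{\varepsilon,\delta}\le \frac4{27}\kappa_\delta\alpha_\delta r$, as checked after \eqref{eq:Delta-delta}.
\end{itemize}
Then Propositions~\ref{prop:escape-count}, \ref{prop:far-count} and~\ref{prop:entry-count} give \eqref{eq:NE-bound} and \eqref{eq:NDCA-bound}.

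One point needs care: every DCA step must be either far or a lock-entry step. This holds because a DCA step is taken only when $S(x)>\tau$. Its successor therefore either satisfies $S(z)>\tau$, which makes the step far, or satisfies $S(z)\le\tau$, in which case the lock activates at $z$ on the next pass of the loop.

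\textbf{Item 3.} The persistent Krylov order $j$ only increases, and it is never reset after a correction (Remark~\ref{rem:persistent-state}). Inside a lock, $j$ is incremented only while $j<J$, so across the whole stage at most $J_\delta(p)$ Krylov steps are ever generated. Breakdown only sets $j\gets J$ earlier and so can only reduce this count. Remark~\ref{rem:krylov-index} then turns this into at most $J+O(1)$ new matrix-vector products.

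Once $j=J$, each later lock performs only the tests \eqref{eq:red-test} and \eqref{eq:green-test} at order $J$, with no new products. The number of Ritz tests is at most $J$ plus the number of locks, and the number of locks is at most $N_E+1$.

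\textbf{Item 1.} Termination follows from items 2 and 3. The total number of primal operations is at most $N_{\rm DCA}+N_E$, which is finite. The number of spectral operations is also finite. Hence the outer while loop can run only finitely many iterations, each of which does finite work.

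The one subtlety is to rule out a lock that never resolves. This cannot happen, since within a lock $j$ strictly increases until it reaches $J$, and at $j=J$ the algorithm declares either \textsc{Red} or \textsc{Green}.

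\textbf{Item 4.} The stage ends only through the \textsc{Green} branch, so the output $x_{\varepsilon,\delta}$ satisfies two conditions.
\begin{itemize}
\item It satisfies $S\le\tau\le\varepsilon$, which gives \eqref{eq:fixed-output-residual}.
\item It satisfies \eqref{eq:green-test} at order $J$, even when reached by breakdown, because at breakdown the Ritz values are exact.
\end{itemize}
Proposition~\ref{prop:red-green}(b) on $\Omegacal_{\delta,p}$ then gives \eqref{eq:fixed-output-psd}.

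\textbf{Main obstacle.} I expect the hardest part to be the bookkeeping that justifies the entry count \eqref{eq:Nenter-bound}. One needs that a correction lands at a point $x^+$ where the algorithm resumes. If $S(x^+)\le\tau$, a new lock begins immediately without any DCA step; this does not increase $N_{\rm enter}$, and it still yields a correction or termination. So the bound on DCA steps is unaffected, and the finiteness argument only needs that the number of locks is at most $N_E+1$.
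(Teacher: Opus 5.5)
Your proposal is correct and follows essentially the same route as the paper: both take the counts from Propositions~\ref{prop:escape-count}, \ref{prop:far-count}, and~\ref{prop:entry-count}, bound the spectral work by $J_\delta(p)$ using the persistence and monotonicity of the Krylov order, and derive the output certificate from the terminal \textsc{Green} lock via $S\le\tau\le\varepsilon$ and Proposition~\ref{prop:red-green}(b). Your direct count of locks as at most $N_E+1$, since each lock ends in either one correction or termination, is a slightly cleaner form of the paper's argument that charges initial, post-DCA, and post-correction lock entries separately.
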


\begin{proof}
Propositions~\ref{prop:escape-count} and~\ref{prop:far-count} show that the
numbers of negative-curvature corrections and far DCA steps are finite.
Proposition~\ref{prop:entry-count} bounds the lock entries reached after DCA
steps.  In addition, there can be at most one initial lock entry, while any
lock entered immediately after a negative-curvature correction can be charged
to that correction.  Since $N_E<\infty$, the total number of Certificate-Lock
activations is therefore finite.

During a lock, the primal point is fixed and the persistent Krylov process is
refined only up to order $J$, or terminates earlier upon exact Lanczos
breakdown.  Hence every lock is resolved after finitely many spectral
operations, and the stage terminates.  Because the Krylov state persists
across all corrections, the spectral cost is bounded by the single prescribed
order $J$, up to the initialization convention of
Remark~\ref{rem:krylov-index}.

The stage terminates at a full-depth certification lock.  At that point
$S(x)\le\tau_{\varepsilon,\delta}\le\varepsilon$, proving
\eqref{eq:fixed-output-residual}.  On the event $\Omegacal_{\delta,p}$, Proposition~\ref{prop:red-green} gives
\eqref{eq:fixed-output-psd}.
\end{proof}

\begin{remark}
\label{rem:termination-correctness}
Finite stage termination is deterministic once the random starting vector is
fixed: at full Krylov depth the algorithm either detects a direction satisfying
\eqref{eq:red-test} or reaches the complementary full-depth condition
\eqref{eq:green-test}.  The event $\Omegacal_{\delta,p}$ is needed only to
certify that the latter implies the spectral guarantee
\eqref{eq:fixed-output-psd}.
\end{remark}

\section{Quantitative Global Certification}
\label{sec:global-cert}

The fixed-stage output combines a first-order residual with an approximate
positive-semidefiniteness certificate.  For the trust-region subproblem these
two quantities directly control the global objective error.

\begin{theorem}[Approximate certificates imply global objective accuracy]
\label{thm:approx-global}
Let $x\in B_r$ and let $\varepsilon,\delta\ge0$.  Suppose
\begin{equation}
    S(x)\le\varepsilon
    \label{eq:global-residual-assump}
\end{equation}
and
\begin{equation}
    A+\lambda(x)I\succeq-\delta I.
    \label{eq:global-psd-assump}
\end{equation}
Then
\begin{equation}
    0
    \le
    q(x)-q^\star
    \le
    2r\varepsilon+2r^2\delta.
    \label{eq:global-objective-bound}
\end{equation}
\end{theorem}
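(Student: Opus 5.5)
Take any global minimizer $x^\star\in X^\star$ and compare $q(x)$ with $q(x^\star)$ through the exact expansion \eqref{eq:master-general} of Lemma~\ref{lem:perturbation}, with $d:=x^\star-x$. The lower bound $q(x)-q^\star\ge0$ holds because $x\in B_r$. For the upper bound, rearranging \eqref{eq:master-general} gives
\[
    q(x)-q(x^\star)
    =
    -e(x)^\top d-\tfrac12 d^\top B(x)d
    +\lambda(x)\Bigl(x^\top d+\tfrac12\|d\|^2\Bigr).
\]
I will bound the three terms on the right separately.

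\emph{First term.} By Proposition~\ref{prop:dca-basic}, $\|e(x)\|=S(x)\le\varepsilon$. Since $x,x^\star\in B_r$, we have $\|d\|\le 2r$, so $-e(x)^\top d\le 2r\varepsilon$.

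\emph{Second term.} Assumption \eqref{eq:global-psd-assump} gives $d^\top B(x)d\ge-\delta\|d\|^2$. Hence
\[
    -\tfrac12 d^\top B(x)d\le\tfrac{\delta}{2}\|d\|^2\le 2r^2\delta .
\]

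\emph{Third term.} This is the only step needing care, because $x^\star$ need not lie on the sphere. We use the identity
\[
    x^\top d+\tfrac12\|d\|^2=\tfrac12\bigl(\|x^\star\|^2-\|x\|^2\bigr).
\]
If $\|x\|<r$, then $\lambda(x)=0$ and the term vanishes. If $\|x\|=r$, then $\|x^\star\|\le r$ makes the bracket nonpositive, and $\lambda(x)\ge0$ makes the whole term $\le0$.

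Summing the three bounds gives $q(x)-q^\star\le 2r\varepsilon+2r^2\delta$, which is \eqref{eq:global-objective-bound}.

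I expect no real obstacle. The only points to verify are that the sign of $\lambda(x)\bigl(\|x^\star\|^2-\|x\|^2\bigr)$ works in both cases of the definition \eqref{eq:lambda-x}, and that the cruder estimate $\|d\|\le 2r$ already produces the stated constants, which it does.
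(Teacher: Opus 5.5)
Your proof is correct and follows essentially the same route as the paper. Both expand via the master identity with $d=x^\star-x$, use $\|e(x)\|=S(x)$ and $B(x)\succeq-\delta I$, and handle the $\lambda(x)$-term through $x^\top d+\tfrac12\|d\|^2=\tfrac12(\|x^\star\|^2-\|x\|^2)\le 0$ on the boundary before applying $\|d\|\le 2r$; the paper merely states the intermediate lower bound $q(y)-q(x)\ge-\varepsilon\|d\|-\tfrac{\delta}{2}\|d\|^2$ for every feasible $y$ before specializing to $y=x^\star$.
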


\begin{proof}
Fix any $y\in B_r$ and write $d:=y-x$.  From
\eqref{eq:master-general},
\begin{equation}
    q(y)-q(x)
    =
    e(x)^\top d
    +\frac12d^\top B(x)d
    -\lambda(x)
    \left(x^\top d+\frac12\|d\|^2\right).
    \label{eq:global-master}
\end{equation}
If $\|x\|<r$, then $\lambda(x)=0$.  If $\|x\|=r$, feasibility of $y$ gives
\[
    x^\top d+\frac12\|d\|^2
    =
    \frac12(\|y\|^2-r^2)
    \le0.
\]
Because $\lambda(x)\ge0$, the last term in \eqref{eq:global-master} is
therefore nonnegative in either case.  Using
$\|e(x)\|=S(x)\le\varepsilon$ and
$B(x)\succeq-\delta I$ yields
\begin{equation}
    q(y)-q(x)
    \ge
    -\varepsilon\|d\|
    -\frac{\delta}{2}\|d\|^2.
    \label{eq:global-lower}
\end{equation}
Take $y=x^\star\in X^\star$.  Since both $x$ and $x^\star$ lie in $B_r$,
$\|x-x^\star\|\le2r$.  Rearranging \eqref{eq:global-lower} gives
\[
    q(x)-q^\star
    \le
    2r\varepsilon+2r^2\delta.
\]
The lower bound in \eqref{eq:global-objective-bound} follows from global
optimality of $q^\star$.
\end{proof}

\begin{corollary}[Fixed-stage objective certificate]
\label{cor:fixed-global}
On $\Omegacal_{\delta,p}$, the output of
Theorem~\ref{thm:fixed-stage} satisfies
\begin{equation}
    q(x_{\varepsilon,\delta})-q^\star
    \le
    2r\varepsilon+2r^2\delta.
    \label{eq:fixed-global-bound}
\end{equation}
\end{corollary}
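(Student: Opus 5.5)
The corollary follows by combining Theorem~\ref{thm:fixed-stage} with Theorem~\ref{thm:approx-global}; the only task is to check that the stage output meets the hypotheses of the latter.

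First I will work on the event $\Omegacal_{\delta,p}$. Item~4 of Theorem~\ref{thm:fixed-stage} gives $x_{\varepsilon,\delta}\in B_r$. Feasibility holds because every primal move is either a projection $D_\rho$ onto $B_r$ or the feasible correction of Theorem~\ref{thm:early-escape}. The same item gives the residual bound $S(x_{\varepsilon,\delta})\le\varepsilon$ from \eqref{eq:fixed-output-residual}. This is exactly hypothesis \eqref{eq:global-residual-assump}.

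Second, I will convert the scalar spectral bound \eqref{eq:fixed-output-psd}, namely $\lammin(A)+\lambda(x_{\varepsilon,\delta})>-\delta$, into the matrix inequality \eqref{eq:global-psd-assump}. For symmetric $A$ and any unit vector $u$,
\[
    u^\top\bigl(A+\lambda(x_{\varepsilon,\delta})I\bigr)u
    \ge\lammin(A)+\lambda(x_{\varepsilon,\delta})
    >-\delta .
\]
Hence $B(x_{\varepsilon,\delta})\succeq-\delta I$. The strict inequality only strengthens the non-strict hypothesis.

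Third, I will apply Theorem~\ref{thm:approx-global} with the same pair $(\varepsilon,\delta)$. This yields $0\le q(x_{\varepsilon,\delta})-q^\star\le 2r\varepsilon+2r^2\delta$, which is \eqref{eq:fixed-global-bound}.

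No step is a genuine obstacle. The one point to state explicitly is that the stage output is the point at which the terminal \textsc{Green} lock was declared. The residual and spectral certificates are both evaluated at that same point, so they refer to a common $\lambda(x_{\varepsilon,\delta})$ and can be fed jointly into Theorem~\ref{thm:approx-global}.
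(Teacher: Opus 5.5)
Your proposal is correct and is the paper's argument: the paper's proof simply combines \eqref{eq:fixed-output-residual} and \eqref{eq:fixed-output-psd} with Theorem~\ref{thm:approx-global}. You also spell out two points the paper leaves implicit. The first is feasibility of the stage output, which follows from the algorithm's projection and sphere-intersection moves rather than literally from item~4. The second is the conversion of $\lammin(A)+\lambda(x_{\varepsilon,\delta})>-\delta$ into $B(x_{\varepsilon,\delta})\succeq-\delta I$.
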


\begin{proof}
Combine \eqref{eq:fixed-output-residual} and
\eqref{eq:fixed-output-psd} with Theorem~\ref{thm:approx-global}.
\end{proof}


\section{Progressive Certification}
\label{sec:progressive}

We now combine the fixed-stage theory with the progressive schedule
\eqref{eq:delta-schedule}--\eqref{eq:p-schedule}.  Let $x_m$ denote the certified
output of stage $m$, and set
\begin{equation}
    J_m:=J_{\delta_m}(p_m).
    \label{eq:Jm}
\end{equation}
Define the all-stage successful spectral event
\begin{equation}
    \Omegacal_\infty
    :=
    \bigcap_{m=0}^{\infty}
    \Omegacal_{\delta_m,p_m}.
    \label{eq:Omega-infty}
\end{equation}
Since $\sum_{m\ge0}p_m=p$, the union bound gives
\begin{equation}
    \Pr(\Omegacal_\infty)\ge1-p.
    \label{eq:Omega-infty-prob}
\end{equation}
No independence between stages is required.

\subsection{Global convergence of the certified sequence}

\begin{theorem}[Progressive global certification]
\label{thm:progressive}
Run Algorithm~\ref{alg:cc-dca} with
\[
    \delta_m=\delta_0\beta^m,
    \qquad
    \varepsilon_m=r\delta_m,
    \qquad
    p_m=\frac{p}{2^{m+1}},
\]
where $0<\beta<1$ and $0<p<1$.  On the event
$\Omegacal_\infty$, every stage is correctly certified and
\begin{align}
    S(x_m)
    &\le r\delta_m,
    \label{eq:progressive-residual}\\
    \lammin(A)+\lambda(x_m)
    &>-\delta_m,
    \label{eq:progressive-psd}\\
    0\le q(x_m)-q^\star
    &\le4r^2\delta_m.
    \label{eq:progressive-objective}
\end{align}
Consequently,
\begin{equation}
    q(x_m)\to q^\star,
    \label{eq:objective-convergence}
\end{equation}
and
\begin{equation}
    \dist(x_m,X^\star)\to0.
    \label{eq:set-convergence}
\end{equation}
If $X^\star=\{x^\star\}$ is a singleton, then
\begin{equation}
    x_m\to x^\star.
    \label{eq:unique-convergence}
\end{equation}
\end{theorem}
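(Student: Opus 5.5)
The plan is to apply the fixed-tolerance results stage by stage, then use a compactness argument for the distance statement.

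\emph{Step 1 (admissible $\rho$ at every stage).} On $\Omegacal_\infty$, every upper-edge event through stage $m$ succeeds. Proposition~\ref{prop:upper-calibration} then gives $\lammax(A)\le\rho_{m+1}\le\bar\rho$ for all $m$. For $m=0$ we have $\rho_0=\max\{\rho_{\min},M\}=\bar\rho\ge\lammax(A)$ directly. Hence \eqref{eq:fixed-stage-rho} holds at every stage, and Theorem~\ref{thm:fixed-stage} applies to stage $m$ with $(\varepsilon_m,\delta_m,p_m)$ and input $x^{(m)}=x_{m-1}$. Its stage gap $\mathcal B$ is finite because $q$ is bounded on $B_r$. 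So each stage terminates.

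\emph{Step 2 (persistent Krylov state).} One point needs care: the persistent Krylov order may already exceed $J_m$ from earlier stages. This is harmless, because $J_m$ is nondecreasing in $m$: $\delta_m$ and $p_m$ decrease, except for the cap at $n$. Moreover, Theorem~\ref{thm:lanczos-two-edge} is stated at order $J_m$, and the Ritz bounds only improve as the subspace grows. On $\Omegacal_{\delta_m,p_m}$, item 4 of Theorem~\ref{thm:fixed-stage} yields $S(x_m)\le\varepsilon_m=r\delta_m$ and $\lammin(A)+\lambda(x_m)>-\delta_m$. These are \eqref{eq:progressive-residual} and \eqref{eq:progressive-psd}.

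\emph{Step 3 (objective bound).} Since $\lammin(A)+\lambda(x_m)>-\delta_m$, we get $A+\lambda(x_m)I\succeq-\delta_m I$. Theorem~\ref{thm:approx-global} (equivalently Corollary~\ref{cor:fixed-global}) gives
\[
    0\le q(x_m)-q^\star\le 2r(r\delta_m)+2r^2\delta_m=4r^2\delta_m,
\]
which is \eqref{eq:progressive-objective}. Since $\delta_m=\delta_0\beta^m\to0$, \eqref{eq:objective-convergence} follows.

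\emph{Step 4 (distance to $X^\star$).} This is the only non-routine step; I will argue by contradiction using compactness. Suppose $\dist(x_{m_k},X^\star)\ge\eta>0$ along some subsequence. Since $B_r$ is compact, a further subsequence converges to some $\bar x\in B_r$. Continuity of $q$ gives $q(\bar x)=q^\star$, so $\bar x\in X^\star$. Continuity of $\dist(\cdot,X^\star)$, with $X^\star$ nonempty and closed, then gives $\dist(\bar x,X^\star)\ge\eta$, a contradiction. This proves \eqref{eq:set-convergence}. When $X^\star=\{x^\star\}$, the distance is simply $\|x_m-x^\star\|$, which gives \eqref{eq:unique-convergence}.
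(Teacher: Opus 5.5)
Your proposal is correct and follows the paper's proof essentially step for step. Admissibility of $\rho_m$ comes from $\rho_0=\bar\rho$ and Proposition~\ref{prop:upper-calibration}; Theorem~\ref{thm:fixed-stage} and Theorem~\ref{thm:approx-global} with $\varepsilon_m=r\delta_m$ then give the three bounds; and the same compactness-by-contradiction argument gives $\dist(x_m,X^\star)\to0$. Your Step~2 is an extra precaution the paper does not need: $J_m$ is nondecreasing, and the cap at $n$ preserves this, so the Krylov order never exceeds $J_m$. Your explicit appeal to continuity of $\dist(\cdot,X^\star)$ just fills in a detail the paper leaves implicit.
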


\begin{proof}
At stage $0$, $U_{-1}=M$ gives
$\rho_0=\max\{\rho_{\min},M\}=\bar\rho\ge\lambda_{\max}(A)$.  On
$\Omegacal_\infty$, Proposition~\ref{prop:upper-calibration} gives
$\lambda_{\max}(A)\le\rho_m\le\bar\rho$ for every $m\ge1$.  Hence
Theorem~\ref{thm:fixed-stage} applies at every stage, giving
\eqref{eq:progressive-residual} and \eqref{eq:progressive-psd}.  Theorem~\ref{thm:approx-global} and
$\varepsilon_m=r\delta_m$ then imply
\[
    q(x_m)-q^\star
    \le
    2r(r\delta_m)+2r^2\delta_m
    =4r^2\delta_m,
\]
which proves \eqref{eq:progressive-objective}.  Since
$\delta_m\to0$, \eqref{eq:objective-convergence} follows.

It remains to prove convergence to the solution set.  Suppose, to the
contrary, that \eqref{eq:set-convergence} fails.  Then, there exist
$\eta>0$ and a subsequence $\{x_{m_k}\}$ such that
\[
    \dist(x_{m_k},X^\star)\ge\eta
    \qquad\text{for all }k.
\]
The trust-region ball $B_r$ is compact, so a further subsequence converges to
some $\bar x\in B_r$.  By continuity of $q$ and
\eqref{eq:objective-convergence},
$
    q(\bar x)=q^\star,
$
so $\bar x\in X^\star$, contradicting the distance lower bound.  

Hence,
\eqref{eq:set-convergence} holds.  If the solution set is a singleton,
set convergence is equivalent to \eqref{eq:unique-convergence}.
\end{proof}

\subsection{Persistent spectral amortization}
\label{subsec:spectral-amortization}

Because $\delta_m$ decreases and $p_m$ decreases, the sufficient Krylov
depths $J_m$ in \eqref{eq:J-delta-p} are nondecreasing.  The basis is never
restarted, so stage $m$ extends the spectral state only from its previous
depth to $J_m$.

\begin{proposition}[Spectral amortization]
\label{prop:spectral-amortization}
Through the end of stage $M$, Algorithm~\ref{alg:cc-dca} requires at most
$J_M$ Krylov steps.  In the absence of early exact Lanczos breakdown, the
successive extensions telescope as
\begin{equation}
    J_0+(J_1-J_0)+\cdots+(J_M-J_{M-1})=J_M.
    \label{eq:krylov-telescope}
\end{equation}
Early exact breakdown can only reduce the required spectral work.  Under the
matrix-vector counting convention of Remark~\ref{rem:krylov-index},
\begin{equation}
    N_{Av}^{0:M}
    \le
    J_M+O(1).
    \label{eq:matvec-amortized}
\end{equation}
\end{proposition}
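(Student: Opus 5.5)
The argument is a counting argument on the single persistent Krylov order $j$ maintained by Algorithm~\ref{alg:cc-dca}. First, monotonicity of the prescribed depths: by \eqref{eq:J-delta-p}, $J_\delta(p)$ is nonincreasing in $\delta$ and in $p$. The factor $\sqrt{2M/\delta}$ and the factor $\log(11n/p^2)$ both increase as $\delta$ or $p$ decrease, the ceiling and the outer $\min\{n,\cdot\}$ preserve monotonicity, and both schedules $\delta_m=\delta_0\beta^m$ and $p_m=p/2^{m+1}$ are strictly decreasing. Hence $J_0\le J_1\le\cdots\le J_M$.

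Next, the invariant to carry by induction on the stage index is that at the end of stage $m$ the persistent order satisfies $j\le J_m$. Within a stage, the only place the algorithm increments $j$ is the inner loop of Certificate Lock, which is guarded by $j<J_m$. Every increment is $j\gets j+1$ from a value below $J_m$, or $j\gets J_m$ on breakdown, so $j$ never exceeds $J_m$ during stage $m$. Negative-curvature corrections and DCA steps do not touch the Krylov state (Remark~\ref{rem:persistent-state}), and a new stage never resets $j$. So if $j\le J_{m-1}$ at entry to stage $m$, then $j\le J_m$ throughout stage $m$, using $J_{m-1}\le J_m$. The base case is $j=1\le J_0$, since $J_0\ge1$ (both arguments of the $\min$ are at least $1$). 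Thus the total number of Krylov steps through stage $M$ equals the final order, which is at most $J_M$.

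For the telescoping identity \eqref{eq:krylov-telescope}, assume no early breakdown. Each stage terminates only through a \textsc{Green} declaration at $j=J_m$, and Theorem~\ref{thm:fixed-stage} guarantees termination. So the order at the end of stage $m$ is exactly $J_m$, and the work added during stage $m$ is $J_m-J_{m-1}$, with the convention $J_{-1}$ replaced by $0$ for the initial order. Summing these increments telescopes to $J_M$. If breakdown occurs, the algorithm sets $j\gets J_m$ without further products, and all later stages likewise skip extension. The actual work is then the breakdown order, which is smaller.

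Finally, \eqref{eq:matvec-amortized} follows from Remark~\ref{rem:krylov-index}: reaching order $J_M$ with a persistent Lanczos recurrence costs $J_M+O(1)$ products with $A$. The Ritz values $\theta_j^\pm$ and vectors $v_j^-$ come from the tridiagonal matrix and the basis without extra products, and evaluating $\lambda(x)$ uses products charged to the primal iteration, not to the spectral count. The only delicate point is bookkeeping. We must make sure that testing \eqref{eq:red-test} at repeated locks at the same order, and the upper-edge update \eqref{eq:U-update}, reuse existing Ritz data rather than recomputing products. The $O(1)$ overhead must be counted once for the whole run, not once per stage, which holds because the basis is initialized once.
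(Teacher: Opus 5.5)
Your proposal is correct and follows the paper's own route: the depths $J_m$ are monotone under the decreasing schedules, and the nested Krylov state persists, so stage $m$ generates only the increment $J_m-J_{m-1}$. The increments telescope, early breakdown only reduces the work, and the matrix-vector bound then comes from Remark~\ref{rem:krylov-index}. You make explicit two points the paper leaves implicit, namely the monotonicity of $J_\delta(p)$ in $(\delta,p)$ and the loop-guard invariant $j\le J_m$; one small caveat is that citing Theorem~\ref{thm:fixed-stage} for termination requires the valid-$\rho$ hypothesis, which the phrase ``through the end of stage $M$'' already presupposes anyway.
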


\begin{proof}
The Krylov spaces are nested by construction and the same random starting
vector and spectral state are retained throughout all stages and structural
corrections.  Since $J_m$ is nondecreasing, in the absence of early exact
breakdown only the incremental orders $J_m-J_{m-1}$ need to be generated at
stage $m$.  Summing these increments gives \eqref{eq:krylov-telescope}.
If exact breakdown occurs earlier, no further Krylov steps are required.
The matrix-vector bound \eqref{eq:matvec-amortized} then follows from
Remark~\ref{rem:krylov-index}.
\end{proof}

\subsection{Warm-start stage complexity}
\label{subsec:warm-complexity}

The progressive schedule also sharpens the stagewise primal complexity
because each stage starts from the already certified output of the previous
stage.  We record explicit small-$\delta$ bounds.

Suppose
\begin{equation}
    \delta\le\frac{8\bar L}{3}.
    \label{eq:small-delta-regime}
\end{equation}
Then, the second branch in \eqref{eq:alpha-delta} is active, so
\begin{equation}
    \alpha_\delta
    =
    \frac{3\delta}{128\bar L},
    \qquad
    \tau_{r\delta,\delta}
    =
    \frac{\delta^2r}{384\bar L},
    \qquad
    \Delta_\delta
    =
    \frac{\delta^3r^2}{16384\bar L^2}.
    \label{eq:small-delta-values}
\end{equation}
For every $m\ge1$, \eqref{eq:progressive-objective} at stage $m-1$ gives
\begin{equation}
    \mathcal B_m^{\rm in}
    :=
    q(x_{m-1})-q^\star
    \le
    4r^2\delta_{m-1}
    =
    \frac{4r^2}{\beta}\delta_m.
    \label{eq:warm-budget}
\end{equation}

\begin{corollary}[Progressive warm-start complexity]
\label{cor:warm-complexity}
Assume stage $m\ge1$ satisfies
$\delta_m\le8\bar L/3$.  On the all-stage successful event, its correction
and far-DCA counts satisfy
\begin{align}
    N_{E,m}
    &\le
    \frac{65536\,\bar L^2}{\beta\delta_m^2},
    \label{eq:warm-NE}\\
    N_{{\rm far},m}
    &\le
    \frac{
        1179648\,\bar L^2(\bar\rho+M)^2
    }{
        \beta\rho_{\min}\delta_m^3
    }.
    \label{eq:warm-Nfar}
\end{align}
Thus, the progressive warm-start stage bounds scale as
\begin{equation}
    N_{E,m}=O(\delta_m^{-2}),
    \qquad
    N_{{\rm far},m}=O(\delta_m^{-3}).
    \label{eq:warm-rates}
\end{equation}
By contrast, applying the fixed-stage bounds with an accuracy-independent
initial objective budget gives the corresponding cold-budget powers
$O(\delta^{-3})$ and $O(\delta^{-4})$.
\end{corollary}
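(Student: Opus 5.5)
The plan is to substitute the warm budget \eqref{eq:warm-budget} and the small-$\delta$ values \eqref{eq:small-delta-values} into the fixed-stage bounds of Propositions~\ref{prop:escape-count} and~\ref{prop:far-count}. First, on the all-stage successful event $\Omegacal_\infty$, Theorem~\ref{thm:progressive} gives $\lammax(A)\le\rho_m\le\bar\rho$ for every stage. Hence stage $m$ is a fixed-tolerance stage in the sense of Section~\ref{sec:fixed-stage}, with $\varepsilon=\varepsilon_m=r\delta_m$, $\delta=\delta_m$ and input $x^{\rm in}=x_{m-1}$. The stage gap $\mathcal B$ in \eqref{eq:stage-gap} is then exactly $\mathcal B_m^{\rm in}$. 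By \eqref{eq:progressive-objective} applied at stage $m-1$, which is certified on $\Omegacal_\infty$, we have $\mathcal B_m^{\rm in}\le 4r^2\delta_m/\beta$.

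For the correction count, I drop the floor in \eqref{eq:NE-bound} and use $\Delta_{\delta_m}=\delta_m^3r^2/(16384\bar L^2)$, which is valid because $\delta_m\le 8\bar L/3$ puts us in the second branch of \eqref{eq:alpha-delta}. This gives
\[
N_{E,m}\le \frac{4r^2\delta_m}{\beta}\cdot\frac{16384\bar L^2}{\delta_m^3r^2}=\frac{65536\,\bar L^2}{\beta\delta_m^2},
\]
which is \eqref{eq:warm-NE}.

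For the far steps, I insert $\tau_{r\delta_m,\delta_m}=\delta_m^2r/(384\bar L)$ into \eqref{eq:Nfar-bound}. Before doing so, one detail needs checking: that the minimum in \eqref{eq:tau-eps-delta} selects $\delta\alpha_\delta r/9$ rather than $\varepsilon=r\delta$. This holds because $\alpha_\delta\le1/16$ gives $\delta\alpha_\delta r/9<r\delta$. Then
\[
N_{{\rm far},m}\le \frac{2\cdot 4r^2\delta_m(\bar\rho+M)^2\cdot 384^2\bar L^2}{\beta\rho_{\min}\delta_m^4 r^2}=\frac{8\cdot147456\,\bar L^2(\bar\rho+M)^2}{\beta\rho_{\min}\delta_m^3},
\]
and $8\cdot147456=1179648$, which is \eqref{eq:warm-Nfar}. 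The rates \eqref{eq:warm-rates} follow immediately.

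For the cold-budget comparison, I replace $\mathcal B$ by a $\delta$-independent constant. For example, $\mathcal B\le q(x^{\rm in})-q^\star\le 2Mr^2+2\|b\|r$ holds because every $q$-value on $B_r$ lies within $\tfrac12Mr^2+\|b\|r$ of zero. The exponents then increase by one: $\Delta_\delta^{-1}=O(\delta^{-3})$ and $\tau^{-2}=O(\delta^{-4})$.

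The main obstacle is bookkeeping rather than ideas. One point is verifying that the previous stage's certified output is indeed the next stage's input, which holds by the line $x^{(m+1)}\gets x_m$ in Algorithm~\ref{alg:cc-dca}. The other is confirming that the branch selections in $\alpha_\delta$ and $\tau$ are as claimed, so that the explicit constants are exact.
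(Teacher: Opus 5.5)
Your proposal is correct and matches the paper's proof: substitute the small-$\delta$ closed forms \eqref{eq:small-delta-values} and the warm budget \eqref{eq:warm-budget} into \eqref{eq:NE-bound} and \eqref{eq:Nfar-bound}, and read off the cold-budget exponents from $\Delta_\delta=\Theta(\delta^3)$ and $\tau_{r\delta,\delta}=\Theta(\delta^2)$. Your additional checks fill in details the paper leaves implicit: the branch selections in $\alpha_\delta$ and $\tau$, and that the stage input is $x_{m-1}$. Your cold budget $2Mr^2+2\|b\|r$ is valid, though $Mr^2+2\|b\|r$ already suffices.
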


\begin{proof}
Substitute \eqref{eq:small-delta-values} and
\eqref{eq:warm-budget} into \eqref{eq:NE-bound} and
\eqref{eq:Nfar-bound}.  For the correction count,
\[
    \frac{\mathcal B_m^{\rm in}}{\Delta_{\delta_m}}
    \le
    \frac{4r^2\delta_m/\beta}
         {\delta_m^3r^2/(16384\bar L^2)}
    =
    \frac{65536\bar L^2}{\beta\delta_m^2}.
\]
For the far-step count,
\begin{align*}
    \frac{
        2\mathcal B_m^{\rm in}(\bar\rho+M)^2
    }{
        \rho_{\min}\tau_{r\delta_m,\delta_m}^2
    }
    &\le
    \frac{
        8r^2\delta_m(\bar\rho+M)^2/\beta
    }{
        \rho_{\min}\delta_m^4r^2/(147456\bar L^2)
    }\\
    &=
    \frac{
        1179648\bar L^2(\bar\rho+M)^2
    }{
        \beta\rho_{\min}\delta_m^3
    }.
\end{align*}
The cold-budget exponents follow directly from
$\Delta_\delta=\Theta(\delta^3)$ and
$\tau_{r\delta,\delta}=\Theta(\delta^2)$ when the initial objective gap is
regarded as independent of $\delta$.
\end{proof}

For geometric tolerances, cumulative stagewise polynomial bounds remain of
the order of the finest stage.  Indeed, for every $s>0$,
\begin{equation}
    \sum_{m=1}^{M}\delta_m^{-s}
    \le
    \frac{\delta_M^{-s}}{1-\beta^s}.
    \label{eq:geometric-sum}
\end{equation}

\subsection{Main results}
\label{subsec:main-theorem}

The preceding results can be summarized as follows.

\begin{theorem}[Progressive certificate-coupled DCA]
\label{thm:main}
Let $A=A^\top\in\R^{n\times n}$, $r>0$, and $\|A\|_2\le M$.
Fix $\rho_{\min}>0$, $0<\beta<1$, $0<p<1$, and
$0<\delta_0\le M$.  Run Algorithm~\ref{alg:cc-dca} with
\[
    \delta_m=\delta_0\beta^m,
    \qquad
    \varepsilon_m=r\delta_m,
    \qquad
    p_m=\frac{p}{2^{m+1}}.
\]
Then, with probability at least $1-p$, the following statements hold
simultaneously for all stages $m$:
\begin{enumerate}
    \item The DC parameter is valid, $\rho_m\ge\lambda_{\max}(A)$, and
    every stage terminates after finitely many DCA updates,
    negative-curvature corrections, and spectral refinements.

    \item Whenever
    \[
        \theta_j^-+\lambda(x)\le-\frac{3\delta_m}{4}
    \]
    under Certificate Lock, an explicit feasible correction $x^+$ satisfies
    \[
        q(x)-q(x^+)
        \ge
        \frac{\delta_m\alpha_{\delta_m}^2r^2}{9}.
    \]
    This implication is deterministic.  The stage output satisfies
    \[
        S(x_m)\le r\delta_m,
        \qquad
        \lambda_{\min}(A)+\lambda(x_m)>-\delta_m,
        \qquad
        0\le q(x_m)-q^\star\le4r^2\delta_m.
    \]

    \item Consequently,
    \[
        q(x_m)\to q^\star,
        \qquad
        \operatorname{dist}(x_m,X^\star)\to0.
    \]

    \item Through any stage $M$, at most $J_M$ Krylov steps are required,
    and the number of products with $A$ used to construct the persistent
    Krylov subspace is at most $J_M+O(1)$.  Moreover, once
    $\delta_m\le8\bar L/3$,
    \[
        N_{E,m}=O(\delta_m^{-2}),
        \qquad
        N_{{\rm far},m}=O(\delta_m^{-3}).
    \]
\end{enumerate}
The probability statement is required only for the full-depth approximate
positive-semidefiniteness certificate and the certified upper-spectral
bound.
\end{theorem}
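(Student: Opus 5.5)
The plan is to assemble Theorem~\ref{thm:main} from the stagewise results already established, working throughout on the all-stage event $\Omegacal_\infty$ of \eqref{eq:Omega-infty}. By \eqref{eq:Omega-infty-prob} this event has probability at least $1-p$, since the allocation $p_m=p/2^{m+1}$ sums to $p$ and the union bound needs no independence. Every deterministic statement will hold pathwise, and every probabilistic statement will hold on $\Omegacal_\infty$. The event must also contain the upper-edge events used in Proposition~\ref{prop:upper-calibration}. This holds because $\Omegacal_{\delta_m,p_m}$ in Theorem~\ref{thm:lanczos-two-edge} already includes \eqref{eq:lanczos-upper}.

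For item~1, I will argue by induction on $m$. At $m=0$, $\rho_0=\max\{\rho_{\min},M\}=\bar\rho\ge\lammax(A)$. For $m\ge1$, Proposition~\ref{prop:upper-calibration} gives $\lammax(A)\le\rho_m\le\bar\rho$ on the successful upper-edge events through stage $m-1$. Hence \eqref{eq:fixed-stage-rho} holds at every stage, and Theorem~\ref{thm:fixed-stage} yields finite termination of each stage. Finiteness of the stage budget $\mathcal B$ follows from continuity of $q$ on the compact set $B_r$.

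For item~2, the correction claim is exactly the computation following \eqref{eq:Delta-delta}. Under Certificate Lock we have $S(x)\le\tau_m\le\frac4{27}\kappa\alpha_\kappa r$ with $\kappa=3\delta_m/4$. Proposition~\ref{prop:red-green}(a) supplies the unit witness $v_j^-$, so Theorem~\ref{thm:early-escape} gives the decrease $\Delta_{\delta_m}=\delta_m\alpha_{\delta_m}^2r^2/9$. This step uses no accuracy property of the Ritz values, so it is deterministic. The three output bounds are \eqref{eq:progressive-residual}--\eqref{eq:progressive-objective} of Theorem~\ref{thm:progressive}. In that theorem the objective bound comes from Theorem~\ref{thm:approx-global}, using $\lammin(A)+\lambda(x_m)>-\delta_m$, which is equivalent to $B(x_m)\succeq-\delta_m I$. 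Item~3 then follows from the remaining conclusions of Theorem~\ref{thm:progressive}, via compactness of $B_r$ and continuity of $q$.

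For item~4, the Krylov and matrix--vector counts come from Proposition~\ref{prop:spectral-amortization}. This uses that $J_m$ is nondecreasing because $\delta_m$ and $p_m$ decrease, together with the persistence of the basis. The asymptotic rates come from Corollary~\ref{cor:warm-complexity}, whose hypothesis $\delta_m\le 8\bar L/3$ is exactly the stated threshold. Corollary~\ref{cor:warm-complexity} applies to stages $m\ge1$, because it uses the warm budget \eqref{eq:warm-budget} from stage $m-1$. If $\delta_0\le 8\bar L/3$ then stage $0$ also lies in the regime; its budget is a fixed finite constant, so it only affects constants. The assumption $\delta_0\le M$ ensures $\delta_0\le M\le\bar L\le 8\bar L/3$, so every stage is in the small-$\delta$ regime.

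The main obstacle is bookkeeping rather than new analysis. One must check that a single event $\Omegacal_\infty$ makes every stage's hypotheses hold simultaneously, in particular that stage $m$'s validity of $\rho$ relies only on events of earlier stages, which closes the induction. One must also confirm that the deterministic parts (termination and the RED decrease) do not depend on that event. Termination is deterministic per Remark~\ref{rem:termination-correctness}, provided $\rho_m\ge\lammax(A)$ so that the DCA descent \eqref{eq:dca-descent} holds. For this reason the probability qualifier is placed on the whole theorem.
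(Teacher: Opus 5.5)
Your proposal is correct and follows the paper's proof essentially line for line. On the all-stage event $\Omegacal_\infty$ you get validity of $\rho_m$ from Proposition~\ref{prop:upper-calibration} (with $\rho_0=\bar\rho$ at stage $0$), termination from Theorem~\ref{thm:fixed-stage}, and the deterministic negative-curvature decrease from Theorem~\ref{thm:early-escape} with $\kappa=3\delta_m/4$. You then obtain the stage certificates and convergence from Theorem~\ref{thm:progressive}, and the work bounds from Proposition~\ref{prop:spectral-amortization} and Corollary~\ref{cor:warm-complexity}, exactly as the paper does. Your additional observations are accurate details the paper leaves implicit: $\delta_0\le M\le\bar L$ places every stage in the small-$\delta$ regime, and termination at stages $m\ge1$ is only guaranteed once the upper-edge events secure $\rho_m\ge\lammax(A)$.
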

\begin{proof}
At stage $0$, $U_{-1}=M$ gives
$\rho_0=\bar\rho\ge\lambda_{\max}(A)$; for $m\ge1$, the validity of the stagewise
DC parameters follows from Proposition~\ref{prop:upper-calibration}.  Finite
stage termination follows from Theorem~\ref{thm:fixed-stage}.  The negative-curvature decrease in
item~2 is Theorem~\ref{thm:early-escape} with
$\kappa=3\delta_m/4$ and the lock threshold
\eqref{eq:tau-eps-delta}; the remaining stagewise certificates and the
convergence statement in item~3 follow from
Theorem~\ref{thm:progressive}.  Finally, the Krylov work bound and the
warm-start estimates in item~4 follow from
Proposition~\ref{prop:spectral-amortization} and
Corollary~\ref{cor:warm-complexity}, respectively.

The deterministic character of the negative-curvature implication and the
probabilistic character of the full-depth spectral certificate follow from
Proposition~\ref{prop:red-green} and
Remark~\ref{rem:asymmetric-cert}.  Applying these results on the common
all-stage Lanczos event completes the proof.
\end{proof}


\section{Numerical Experiments}
\label{sec:experiments}

We report numerical experiments examining the principal features of the
proposed method.  Each experiment uses 20 matched random seeds.  Computational
work is measured by the number $N_{Av}$ of matrix-vector products with $A$,
the number $N_{\rm DCA}$ of DCA updates, and the number $N_E$ of accepted
negative-curvature corrections.  Experimental configurations, randomization,
and reference-solution tolerances are given in
Appendix~\ref{app:experimental-protocol}.  The study comprises 1,300 runs.

\paragraph{Methods and comparisons.}
We use four variants: plain DCA; post-convergence spectral globalization,
which runs DCA to the prescribed first-order tolerance before a cold
lower-edge spectral check and, when necessary, a two-dimensional feasible
circle correction; certificate-coupled DCA with Krylov restarts after
corrections or accuracy stages; and persistent bi-spectral CC-DCA
(Algorithm~\ref{alg:cc-dca}).  E1 compares all four variants, E3 compares the
two certificate-coupled variants, and E4 compares progressive certification
with a direct one-stage run at the final tolerance.  E2 examines the
quantitative decrease estimate, while E5 examines spectral sensitivity and
generic random instances.  Global reference solutions are computed from the
TRS KKT/secular system and checked for feasibility, stationarity,
complementarity, and shifted positive semidefiniteness.

\subsection{Experiment 1 (E1): acting before post-convergence certification}
\label{subsec:e1-results}

The controlled family has $\lambda_1<\lambda_2<0$, $b=0$, with initialization
in an invariant subspace excluding the lowest eigendirection; plain DCA thus
approaches a nonglobal first-order stationary point.  Across 80 matched
dimension--seed pairs, the median DCA counts before the first
negative-curvature correction are $40.0$ for the post-convergence method and
$17.5$ for both certificate-coupled variants.  The latter correction occurs
$22$--$23$ updates earlier on every matched instance, with median difference
$22.5$.  Plain DCA terminates with median objective gap $1.000$.

The cold and persistent certificate-coupled variants have identical
first-correction, $N_{\rm DCA}$, and $N_E$ counts on all 80 matched instances,
with median spectral counts $35.0$ and $31.5$, respectively.  Both attain
median final objective gap $8.37\times10^{-10}$.  Thus E1 separates the
earlier correction due to certificate coupling from the reduction in repeated
spectral work due to Krylov persistence; see
Figure~\ref{fig:mechanism-ablation}(a).

\begin{figure}[t]
\centering
\begin{minipage}{.48\linewidth}
\centering
\includegraphics[width=0.8\linewidth]{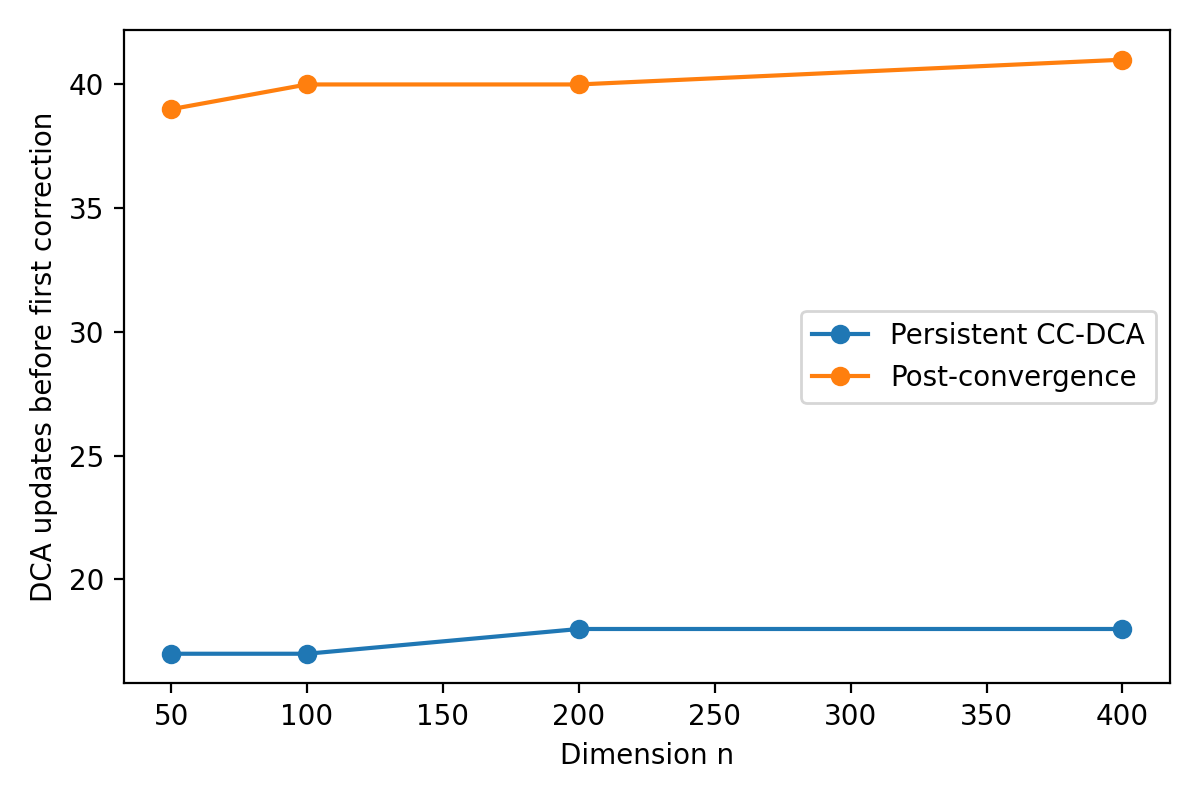}\\[-1mm]
{\small (a) E1: DCA updates before first correction.}
\end{minipage}\hfill
\begin{minipage}{.48\linewidth}
\centering
\includegraphics[width=0.8\linewidth]{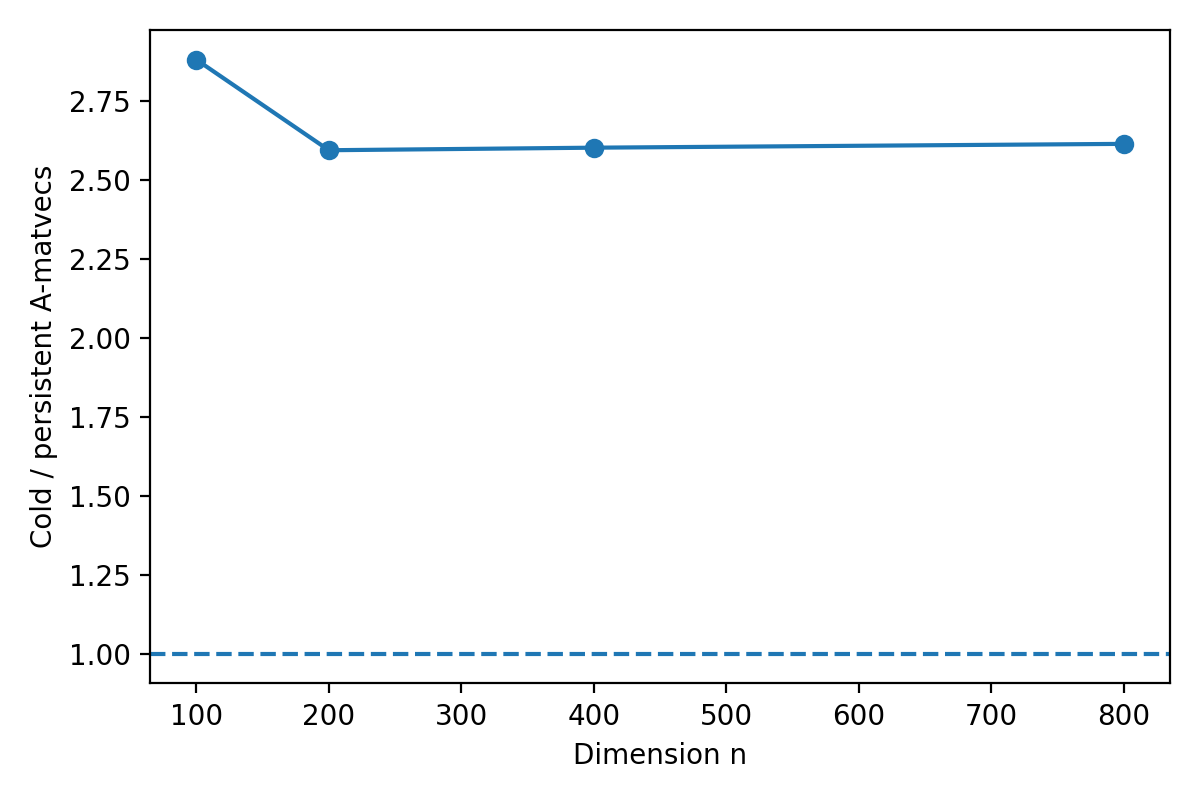}\\[-1mm]
{\small (b) E3: cold/persistent spectral-work ratio.}
\end{minipage}
\caption{E1 and E3 comparisons.  (a) The certificate-coupled correction
precedes the post-convergence correction.  (b) Krylov persistence reduces
repeated spectral work; values above one indicate a reduction relative to
restarting.}
\label{fig:mechanism-ablation}
\end{figure}

\subsection{Experiment 2 (E2): near-orthogonal negative-curvature correction}
\label{subsec:e2-results}

We test the quantitative decrease estimate of
Theorem~\ref{thm:early-escape} for $n=200$ and six alignment levels from exact
orthogonality to $9\times10^{-4}$.  All 120 instances satisfy the theorem
hypotheses.  For
\[
 R_{\rm dec}:=\frac{q(x)-q(x^+)}{\Delta_\delta},
\]
the minimum, median, and maximum are $53.983$, $54.221$, and $57.868$.
Thus $R_{\rm dec}\ge1$ throughout, including at exact orthogonality; the
uniform decrease estimate is conservative over the tested configurations.

\subsection{Experiment 3 (E3): Krylov persistence}
\label{subsec:e3-results}

We compare the cold and persistent certificate-coupled variants using matched
initializations, tolerance schedules, and random starts.  Across 80 runs with
$n\in\{100,200,400,800\}$,
\[
 R_{Av}:=\frac{N_{Av}^{\rm cold}}{N_{Av}^{\rm persistent}}
\]
has median $2.612$, interquartile range $[2.594,2.692]$, and range
$[2.593,2.920]$.  The variants have identical $N_{\rm DCA}$ and $N_E$ on all
80 matched runs.  Hence Krylov persistence reduces spectral matrix-vector
products in this comparison without changing the corresponding DCA-update or
negative-curvature-correction counts; see
Figure~\ref{fig:mechanism-ablation}(b).

\subsection{Experiment 4 (E4): progressive certification}
\label{subsec:e4-results}

We compare five progressive stages $\delta_m=2^{-m}$, $m=0,\ldots,4$, with a
direct one-stage run at $\delta_{\rm final}=0.0625$.  Progressive certification
saves a median of 12 DCA updates but uses a median of 35 additional spectral
matrix-vector products.  Its purpose is therefore not necessarily to reduce
total spectral work, but to produce increasingly accurate certified outputs
while retaining primal and spectral information across stages.  Every final
output satisfies
\[
 q(x)-q^\star\le4r^2\delta_{\rm final},
\]
and the largest observed ratio of objective gap to certificate radius is
$1.04\times10^{-12}$.

\subsection{Experiment 5 (E5): spectral sensitivity and generic robustness}
\label{subsec:e5-results}

For controlled lower spectral gaps $0.1,0.25,0.5,1,$ and $2$, the median
$N_{\rm DCA}$ values are, respectively,
\[
 1655,\quad669,\quad340,\quad160.5,\quad83.
\]
Varying bottom-eigenvalue multiplicity over $\{1,2,4,8,16\}$ or the number of
additional negative directions over $\{0,2,5,10,20\}$ produces only modest
changes in median DCA work.  As $n$ increases from $100$ to $800$, median
$N_{Av}$ increases from $100$ to $132$, while median $N_{\rm DCA}$ changes
only mildly.  Figure~\ref{fig:e5-gap} summarizes the lower-gap dependence.
\begin{figure}[t]
\centering
\includegraphics[width=.60\linewidth]{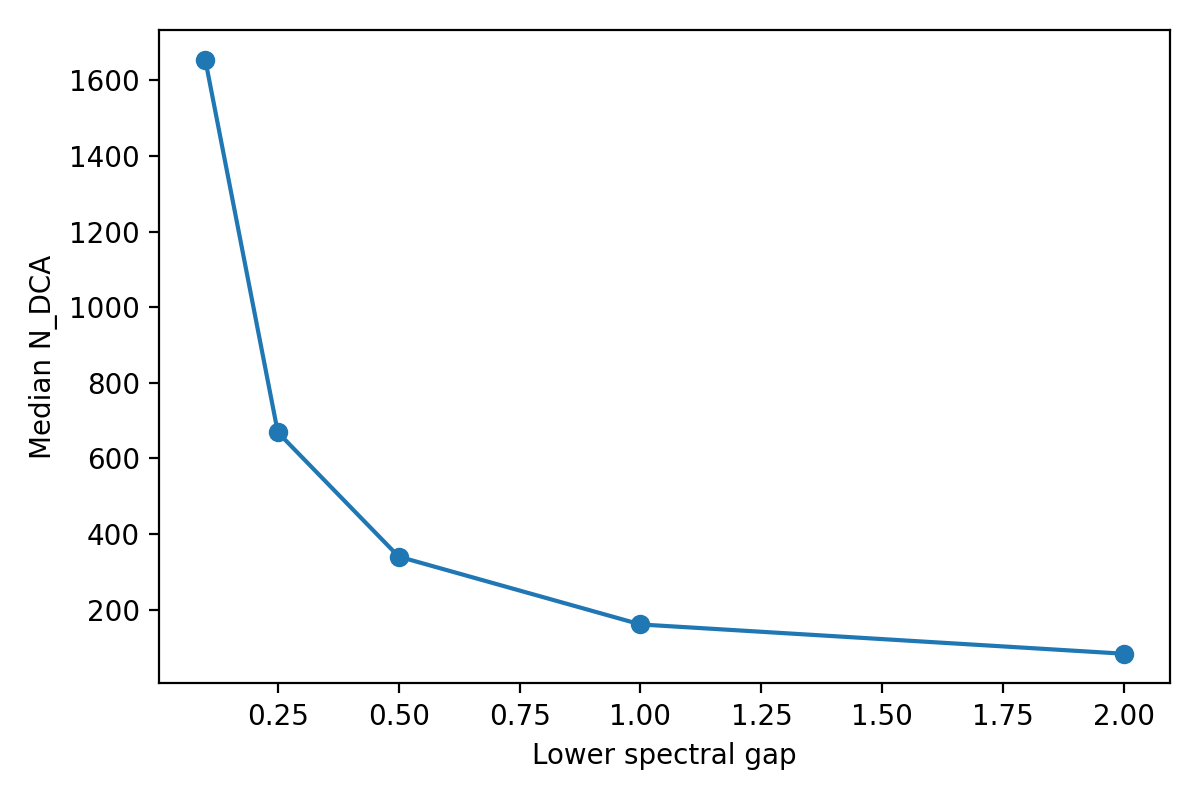}
\caption{E5: median DCA work versus the controlled lower spectral gap.}
\label{fig:e5-gap}
\end{figure}
In the experiment, we additionally consider 240 random spectral TRS instances with
$n\in\{100,200,400\}$, negative-eigenvalue fractions $\{0.1,0.25\}$, and
linear-term scales $\{0.1,0.5\}$.  All 240 runs terminate with the prescribed
full-depth spectral certificate.  The largest objective gap is $0.07498$, or
$0.300$ of the final certificate radius.  The largest reference-solution
stationarity residual is $1.10\times10^{-14}$, with no shifted
positive-semidefiniteness violation beyond the prescribed tolerance.

Taken together, the experiments show earlier negative-curvature correction
on E1, validity of the quantitative decrease estimate in the tested
near-orthogonal regime, reduced repeated spectral work from Krylov persistence,
and the dependence of computational work on spectral structure.

\section{Discussion and Conclusions}
\label{sec:discussion}

The classical DCA approach to the trust-region subproblem combines projected
DCA iterations with a spectral globalization step applied after stationarity
has been reached.  In this work, spectral information is incorporated earlier
and retained throughout the computation.  A sufficiently negative shifted
Rayleigh quotient yields a feasible negative-curvature correction with a
quantified decrease in the objective value, whereas completion of the
prescribed randomized Lanczos depth yields, with high probability, an
approximate positive-semidefiniteness certificate.  The same Krylov subspace
also provides an upper bound on the largest eigenvalue for selecting the DC
curvature parameter at subsequent accuracy stages.  Separating spectral
refinement from DCA updates near stationarity leads to additive bounds on
first-order and Krylov work.

The numerical experiments are consistent with these conclusions.  On the
controlled instances, negative-curvature corrections occur before the
corresponding post-convergence spectral corrections, and the quantitative
decrease estimate remains valid in the tested near-orthogonal regime.
Retaining the Krylov subspace reduces repeated spectral matrix-vector products
in the controlled persistence comparison.  The progressive experiments
further illustrate the use of the retained spectral information across a
sequence of increasingly accurate global certificates.

The analysis in this paper is restricted to the Euclidean trust-region
subproblem.  Extensions to other DC models admitting suitable spectral
certificates, removal of the a priori spectral-norm bound, and less
conservative use of intermediate upper spectral information remain topics for
future investigation.  The results show that, for the trust-region
subproblem, spectral information need not be confined to a globalization test
performed after stationarity: it can be incorporated within the DCA iteration
while preserving explicit finite-tolerance work bounds and progressive global
optimality guarantees.

\section*{Declarations}
AI-assisted tools were used during manuscript preparation to support
exploratory proof checking, consistency checks, computational implementation
and code refactoring, and editorial refinement. All final mathematical claims,
proofs, citations, computational results, and code outputs were independently
reviewed and verified by the author, who takes full responsibility for the
content of the article. Source code, experimental configurations, and
reproducibility data are publicly available at
\url{https://github.com/aisia-research-lab/certificate-coupled-dca}.

\bibliographystyle{siamplain}
\bibliography{cc_dca_references}

\appendix
\section{Experimental Protocol and Reproducibility}
\label{app:experimental-protocol}

The experimental design was fixed before the main runs.  Seeds are
$0,\ldots,19$, with deterministic offsets for the Lanczos starts.  E1 uses
$n\in\{50,100,200,400\}$ and $\delta=0.5$; E2 uses $n=200$ and six
alignment levels from $0$ to $9\times10^{-4}$; and E3--E4 use
$n\in\{100,200,400,800\}$ with $\delta_m=2^{-m}$,
$m=0,\ldots,4$.  E5 varies the lower spectral gap, bottom-eigenvalue
multiplicity, number of additional negative directions, and dimension; its
generic random family additionally varies the fraction of negative
eigenvalues and the linear-term scale.  Controlled instances use seeded
matrices $A=Q\Lambda Q^\top$.  Reference solutions are computed independently
from the classical TRS KKT/secular system and checked for feasibility,
stationarity, complementarity, and shifted positive semidefiniteness.

Each run records $N_{\rm DCA}$, $N_E$, $N_{Av}$, random seeds,
objective and residual diagnostics, termination status, and runtime.
The source code, experimental configuration, environment metadata, raw
results, checkpoints, and summaries are available in the public
reproducibility repository at 
\url{https://github.com/aisia-research-lab/certificate-coupled-dca}.
Offline eigendecompositions used only for instance construction or
verification are excluded from $N_{Av}$.

\section{Geometric Proof of the Early Structural Escape Theorem}
\label{app:escape-proof}

We give the complete construction used in
Theorem~\ref{thm:early-escape}.  Fix $x\in B_r$ and a unit vector $v$ such
that
\begin{equation}
    v^\top B(x)v\le-\kappa,
    \qquad \kappa>0,
    \label{eq:app-neg-curv}
\end{equation}
and abbreviate
\begin{equation}
    \alpha:=\alpha_\kappa
    =\min\left\{\frac1{16},\frac{\kappa}{32\bar L}\right\},
    \qquad
    h:=\sqrt{r^2-\|x\|^2},
    \qquad
    c:=\frac{x^\top v}{r}.
    \label{eq:app-alpha-h-c}
\end{equation}
The proof repeatedly uses an elementary formula for the farther intersection
of a line with the trust-region sphere.

\begin{lemma}[Farther sphere intersection]
\label{lem:farther-intersection}
Let $w\neq0$ and consider the line $x+t w$.  The two solutions of
$\|x+t w\|=r$ are
\begin{equation}
    t_\pm
    =
    \frac{-x^\top w\pm
    \sqrt{(x^\top w)^2+\|w\|^2h^2}}
    {\|w\|^2}.
    \label{eq:sphere-roots}
\end{equation}
If $t_\star$ is the root with the larger absolute value and
$d:=t_\star w$, then
\begin{equation}
    \|d\|
    =
    \frac{|x^\top w|+
    \sqrt{(x^\top w)^2+\|w\|^2h^2}}
    {\|w\|}
    \ge h,
    \label{eq:farther-h}
\end{equation}
and
\begin{equation}
    \|d\|
    \ge
    \frac{2|x^\top w|}{\|w\|}.
    \label{eq:farther-radial}
\end{equation}
In particular, $x+d$ lies on the trust-region sphere.
\end{lemma}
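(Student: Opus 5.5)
Solving $\|x+tw\|^2=r^2$ directly is the whole argument; everything else is elementary inequalities. Expanding gives the quadratic
\[
    \|w\|^2t^2+2(x^\top w)t+(\|x\|^2-r^2)=0,
\]
whose constant term is $-h^2\le0$, with $h$ as in \eqref{eq:app-alpha-h-c}. Because $\|w\|^2>0$, the discriminant equals $4\bigl((x^\top w)^2+\|w\|^2h^2\bigr)\ge0$. The quadratic formula therefore yields exactly the two real roots \eqref{eq:sphere-roots}, and both points $x+t_\pm w$ lie on the sphere $\|y\|=r$.

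Next I identify the root of larger absolute value. Write $s:=\sqrt{(x^\top w)^2+\|w\|^2h^2}$, so that $s\ge|x^\top w|$. Then $t_+\ge0\ge t_-$, and
\[
    |t_+|=\frac{s-x^\top w}{\|w\|^2},\qquad |t_-|=\frac{s+x^\top w}{\|w\|^2}.
\]
The larger of these two is $(s+|x^\top w|)/\|w\|^2$: this is $t_-$ when $x^\top w\ge0$ and $t_+$ when $x^\top w<0$. When $x^\top w=0$ the two are equal, and either root may be taken. Multiplying by $\|w\|$ gives the displayed formula for $\|d\|=|t_\star|\,\|w\|$ in \eqref{eq:farther-h}.

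The two lower bounds now follow from simple estimates on $s$. Using $s\ge\|w\|h$ together with $|x^\top w|\ge0$ gives $\|d\|\ge h$. Using $s\ge|x^\top w|$ gives $\|d\|\ge 2|x^\top w|/\|w\|$, which is \eqref{eq:farther-radial}. Finally, $x+d=x+t_\star w$ lies on the sphere because $t_\star$ is a root of the quadratic.

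The only step needing care is the sign case analysis that shows the farther root carries $+|x^\top w|$ in the numerator, including the degenerate case $x^\top w=0$. Everything else is routine.
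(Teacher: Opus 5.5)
Your proposal is correct and follows the paper's proof essentially step for step: expand $\|x+tw\|^2=r^2$ into a quadratic with constant term $-h^2$, identify the farther root as $(|x^\top w|+s)/\|w\|^2$, and bound $s$ below by $\|w\|h$ and by $|x^\top w|$. Your explicit sign analysis of which root is farther, including the tie when $x^\top w=0$, fills in a step the paper states without comment.
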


\begin{proof}
Equation~\eqref{eq:sphere-roots} follows by expanding
$\|x+t w\|^2=r^2$ and using $h^2=r^2-\|x\|^2$.
For the farther root,
\[
    |t_\star|
    =
    \frac{|x^\top w|+
    \sqrt{(x^\top w)^2+\|w\|^2h^2}}
    {\|w\|^2}.
\]
Multiplying by $\|w\|$ gives the equality in
\eqref{eq:farther-h}.  The lower bound by $h$ follows from
\[
    \sqrt{(x^\top w)^2+\|w\|^2h^2}\ge\|w\|h,
\]
and \eqref{eq:farther-radial} follows from the bound of the same square root
by $|x^\top w|$.
\end{proof}

We now treat the three regimes used in the main text.

\subsection{Case I: sufficient interior slack}
\label{app:case-slack}

Assume
\begin{equation}
    h\ge\alpha r.
    \label{eq:case1-assump}
\end{equation}
Choose $w=v$.  Let $d$ be the farther-intersection displacement given by
Lemma~\ref{lem:farther-intersection}.  Since $\|v\|=1$,
\eqref{eq:farther-h} gives
\[
    \|d\|\ge h\ge\alpha r.
\]
Moreover, because $d$ is a scalar multiple of $v$,
\begin{equation}
    \frac{d^\top B(x)d}{\|d\|^2}
    =v^\top B(x)v
    \le-\kappa
    \le-\frac{16}{27}\kappa.
    \label{eq:case1-curv}
\end{equation}
Thus, this case satisfies both the displacement and curvature bounds required
in the common conclusion below.

\subsection{Case II: small slack but sufficient radial alignment}
\label{app:case-aligned}

Assume
\begin{equation}
    h<\alpha r,
    \qquad
    |c|\ge\alpha.
    \label{eq:case2-assump}
\end{equation}
Again take $w=v$.  Since $x^\top v=rc$, the farther-intersection estimate
\eqref{eq:farther-radial} yields
\begin{equation}
    \|d\|
    \ge2|x^\top v|
    =2r|c|
    \ge2\alpha r
    \ge\alpha r.
    \label{eq:case2-length}
\end{equation}
The curvature ratio is again exactly the Rayleigh quotient of $v$, and hence
\begin{equation}
    \frac{d^\top B(x)d}{\|d\|^2}
    \le-\kappa
    \le-\frac{16}{27}\kappa.
    \label{eq:case2-curv}
\end{equation}

\subsection{Case III: near-boundary and near-orthogonal geometry}
\label{app:case-hard}

It remains to consider
\begin{equation}
    h<\alpha r,
    \qquad
    |c|<\alpha.
    \label{eq:case3-assump}
\end{equation}
Set
\begin{equation}
    \tau
    :=
    \begin{cases}
        2\alpha\,\operatorname{sgn}(c),&c\neq0,\\
        2\alpha,&c=0,
    \end{cases}
    \qquad
    w:=v+\tau\frac{x}{r}.
    \label{eq:app-w}
\end{equation}
Thus, $|\tau|=2\alpha$.  Because $\|x\|\le r$ and
$\alpha\le1/16$,
\begin{equation}
    \|w\|
    \le1+|\tau|\frac{\|x\|}{r}
    \le1+2\alpha
    \le\frac98.
    \label{eq:app-w-norm}
\end{equation}
We first verify the shifted-curvature bound.  By
Lemma~\ref{lem:shifted-bound},
\begin{align}
    w^\top B(x)w
    &=v^\top B(x)v
      +\frac{2\tau}{r}v^\top B(x)x
      +\frac{\tau^2}{r^2}x^\top B(x)x\notag\\
    &\le
      -\kappa+(2|\tau|+\tau^2)\bar L\notag\\
    &=
      -\kappa+(4\alpha+4\alpha^2)\bar L.
    \label{eq:app-w-curv-raw}
\end{align}
The definition of $\alpha$ gives
$\alpha\bar L\le\kappa/32$ and $\alpha\le1/16$.  Therefore,
\begin{equation}
    (4\alpha+4\alpha^2)\bar L
    \le
    \frac{\kappa}{8}+\frac{\kappa}{128}
    =\frac{17}{128}\kappa,
    \label{eq:app-perturb-bound}
\end{equation}
and hence, in particular,
\begin{equation}
    w^\top B(x)w
    \le-\frac{111}{128}\kappa
    <-\frac34\kappa.
    \label{eq:app-w-curv}
\end{equation}
Combining \eqref{eq:app-w-curv} and \eqref{eq:app-w-norm},
\begin{equation}
    \frac{w^\top B(x)w}{\|w\|^2}
    \le
    -\frac{3\kappa/4}{(9/8)^2}
    =-\frac{16}{27}\kappa.
    \label{eq:app-hard-rayleigh}
\end{equation}

It remains to lower-bound the length of the farther sphere-intersection
step.  Since $h<\alpha r$,
\begin{equation}
    \frac{\|x\|^2}{r^2}
    =1-\frac{h^2}{r^2}
    >1-\alpha^2.
    \label{eq:app-x-near-boundary}
\end{equation}
By the sign choice in \eqref{eq:app-w}, the two terms in
\[
    x^\top w
    =rc+\tau\frac{\|x\|^2}{r}
\]
have the same sign when $c\neq0$, while the first term vanishes when $c=0$.
Consequently,
\begin{equation}
    |x^\top w|
    \ge
    2\alpha\frac{\|x\|^2}{r}
    >2\alpha r(1-\alpha^2).
    \label{eq:app-xw-lower}
\end{equation}
Lemma~\ref{lem:farther-intersection} and
\eqref{eq:app-w-norm} now give
\begin{align}
    \|d\|
    &\ge
    \frac{2|x^\top w|}{\|w\|}\notag\\
    &>
    \frac{4\alpha r(1-\alpha^2)}{9/8}
    \ge\alpha r,
    \label{eq:case3-length}
\end{align}
where the last inequality follows from $\alpha\le1/16$.
Because $d$ is a scalar multiple of $w$,
\eqref{eq:app-hard-rayleigh} yields
\begin{equation}
    \frac{d^\top B(x)d}{\|d\|^2}
    \le-\frac{16}{27}\kappa.
    \label{eq:case3-curv}
\end{equation}

\subsection{Common decrease estimate}
\label{app:common-decrease}

In all three cases, we have constructed a displacement $d$ such that
\begin{equation}
    \|x+d\|=r,
    \qquad
    \|d\|\ge\alpha r,
    \qquad
    \frac{d^\top B(x)d}{\|d\|^2}
    \le-\frac{16}{27}\kappa.
    \label{eq:app-common-properties}
\end{equation}
For interior $x$, Lemma~\ref{lem:perturbation} applies because
$\lambda(x)=0$.  For boundary $x$, it applies because both endpoints lie on
the sphere.  Therefore,
\begin{align}
    q(x+d)-q(x)
    &\le
    e(x)^\top d-\frac8{27}\kappa\|d\|^2
    \le
    S(x)\|d\|-\frac8{27}\kappa\|d\|^2.
    \label{eq:app-master-decrease}
\end{align}
Under the hypothesis of Theorem~\ref{thm:early-escape},
\[
    S(x)
    \le\frac4{27}\kappa\alpha r
    \le\frac4{27}\kappa\|d\|.
\]
Substituting this into \eqref{eq:app-master-decrease} gives
\[
    q(x+d)-q(x)
    \le
    -\frac4{27}\kappa\|d\|^2
    \le
    -\frac4{27}\kappa\alpha^2r^2.
\]
Taking $x^+:=x+d$ proves \eqref{eq:escape-decrease} and completes the proof
of Theorem~\ref{thm:early-escape}.

\end{document}